\theoremstyle{plain}
\newtheorem{theorem}{Theorem}
\newtheorem{lemma}{Lemma}[section]
\newtheorem{proposition}[lemma]{Proposition}
\newtheorem{corollary}[lemma]{Corollary}
\theoremstyle{definition}
\newtheorem{definition}[lemma]{Definition}
\newcommand{\refT}[1]{Theorem~\ref{#1}}
\newcommand{\refC}[1]{Corollary~\ref{#1}}
\newcommand{\refL}[1]{Lemma~\ref{#1}}
\newcommand{\refS}[1]{Section~\ref{#1}}
\newcommand{\refPp}[1]{Proposition~\ref{#1}}
\newcommand{\refD}[1]{Definition~\ref{#1}}
\newcommand{\cB}{\mathcal{B}}
\newcommand{\cE}{\mathcal{E}}
\newcommand{\cN}{\mathcal{N}}
\newcommand{\lsim}{\lesssim}
\newcommand{\gsim}{\gtrsim}
\newcommand{\al}{\beta}
\newcommand{\incomp}{\rm Incomp}
\newcommand{\comp}{\rm Comp}
\newcommand{\spread}{\rm Spread}
\newcommand\lrset[1]{\ensuremath{\left\{#1\right\}}}
\newcommand\lrpar[1]{\left(#1\right)}
\newcommand\lrsqpar[1]{\left[#1\right]}
\newcommand\lrcpar[1]{\left\{#1\right\}}
\newcommand\abs[1]{|#1|}
\newcommand\biggabs[1]{\biggl|#1\biggr|}
\newcommand\ceil[1]{\lceil#1\rceil}
\DeclarePairedDelimiterX{\ip}[2]{\langle}{\rangle}{#1, #2}
\DeclarePairedDelimiterX{\norm}[1]{\lVert}{\rVert}{#1} 
\DeclarePairedDelimiterX{\size}[1]{\lvert}{\rvert}{#1} 
\newcommand{\bP}{\mathbb{P}}
\newcommand{\cF}{\mathcal{F}}
\newcommand{\bS}{\mathbb{S}}
\newcommand{\dist}{\text{dist}}
\renewcommand{\Pr}{\mathbb{P}}
\newcommand{\E}{\mathbb{E}}
\newcommand{\R}{\mathbb{R}}
\newcommand{\N}{\mathbb{N}}
\newcommand{\eps}{\varepsilon}
\newcommand{\one}{\textbf{1}}
\let\OLDthebibliography\thebibliography
\renewcommand\thebibliography[1]{
  \OLDthebibliography{#1}
  \setlength{\parskip}{0pt}
  \setlength{\itemsep}{0pt plus 0.3ex}
}
\title{The smallest singular value of inhomogenous random rectangular matrices}
\author{Max Dabagia%
\thanks{School of Computer Science, Georgia Institute of Technology, Atlanta GA~30332, USA. E-mail: {\tt maxdabagia@gatech.edu}.}
\ and Manuel Fernandez V%
	\thanks{School of Mathematics, Georgia Institute of Technology, Atlanta GA~30332, USA. E-mail: {\tt mfernandez39@gatech.edu}.}
}
\date{\today}
\begin{document}

\maketitle 
\begin{abstract}
    Let $A \in \R^{N \times n}$ be a random matrix with independent entries $\xi$ satisfying $\E \xi = 0, \E \xi^2 = 1, \E|\xi|^{2+\al} \leq R$. We show that the smallest singular value $\sigma_n(A)$ satisfies 
    \[
    \Pr\lrpar{\sigma_n(A) \leq \eps(\sqrt{N+1} - \sqrt{n})} \leq (C\eps)^{N-n+1} + e^{-cN},
    \]
    for all $\eps > 0$, where $c,C$ depend only on $\al$ and $R$. This extends earlier results \cite{RV} in the non-square case, where such a lower tail estimate was only known for all aspect ratios when $A$ was i.i.d subgaussian. When the $2+\al$ moment assumption is replaced with a uniform anti-concentration assumption, $\sup_z \Pr\lrpar{|\xi-z| < a} < b$, we show that 
    \[
    \Pr\lrpar{\sigma_n(A) \leq \eps(\sqrt{N+1} - \sqrt{n})} \leq (C\eps\log(1/\eps))^{N-n+1} + e^{-cN},
    \]
    where $c,C$ depend only on $a$ and $b$. This extends more recent work \cite{GL} in the non-square case, where the matrix was required to have i.i.d. rows. To prove these results we overcome a number of difficulties which require new technical ingredients, including a new deviation inequality for the regularized Hilbert-Schmidt norm \eqref{eq:regularized-HS} and a recently proven small ball estimate for the distance between a random vector and a subspace spanned by the columns of an inhomogeneous random rectangular matrix \cite{distance}.
    
\end{abstract}
\section{Introduction}\label{Sec:intro}
Given an $N \times n$ ($N \geq n$) matrix $A$ the singular values of $A$, $\sigma_1(A) \geq \cdots \geq \sigma_n(A)$, are the square roots of the eigenvalues of $A^*A$. In non-asymptotic random matrix theory a problem of fundamental importance has been to understand the behavior of the smallest singular value of a random matrix, 
\[
\sigma_n(A) = \inf_{x \in \bS^{n-1}}\norm{Ax}_2.
\]
For square matrices, sharp estimates for $\sigma_n(A)$ were first obtained in the case where the entries of $A$ are i.i.d. standard gaussian, due to Szarek \cite{szarek} and Edelman \cite{edelman}. Subsequent work by Tao and Vu \cite{TaoVu}, Rudelson \cite{Rudelson1} and finally Rudelson and Vershynin  \cite{RV-square},\cite{RV} established the correct behavior for square random matrices with i.i.d. subgaussian entries. Since then the behavior of the smallest singular value has been extensively studied under various random matrix models. We recommend the recent survey of Tikhomirov \cite{survey} on quantitative invertibility in the non-Hermitian setting for more information on these types of results.
In this paper we consider lower tail estimates for the smallest singular value of non-Hermitian random rectangular matrices. In \cite{Litvak1} Litvak, Rudelson, Pajor, and Tomczak-Jaegermann showed that 
the smallest singular value of an $N \times n$ random matrix with i.i.d. mean zero subgaussian entries with aspect ratio at least $N/n \geq 1 + c_1/(\log n)$ satisfies the following lower tail estimate:
\begin{equation}\label{eq:Litvak1-rectangular}
    \Pr\lrpar{\sigma_n(A) \leq c_2\sqrt{N}} \leq e^{-c_3N}.
\end{equation}
Here $c_1,c_2,c_3$ depend only on the subgaussian moments and the variances of the entries. Besides the result, the paper introduced a number of important ideas for analyzing the smallest singular value, including that of decomposing the unit sphere into compressible and incompressible vectors and applying separate $\epsilon$ net arguments to each set. These ideas were further developed in \cite{RV}, where Rudelson and Vershynin obtained the following lower tail estimate for $\sigma_n(A)$ in the same setting, but now without the aspect ratio assumption:
\begin{equation}\label{eq:RV-rectangular}
    \Pr\lrpar{\sigma_n(A) \leq \eps(\sqrt{N+1} - \sqrt{n})} \leq (C\eps)^{N-n+1} + e^{-cN},
\end{equation}
for all $\eps > 0$.
Of particular note is their development of the `invertibility-via-distance' approach (see \refL{lem:invert-via-dist}) and new estimates for the small ball probability of the distance between a random vector and a subspace (a.k.a. a `distance' theorem). Later in \cite{GL}, Livshyts obtained lower tail estimates for $\sigma_n(A)$ when entries of $A$ are heavy-tailed and non-identically distributed (i.e the inhomogenous case). In particular the following estimate was given when $A$ has identically distributed rows with independent entries that have mean 0, variance 1 and are uniformly anti-concentrated entries:
\begin{equation}\label{eq:GL-rectangular}
    \Pr\lrpar{\sigma_n(A) \leq \eps(\sqrt{N+1} - \sqrt{n})} \leq 
    \begin{cases}
        (C\eps\log(1/\eps))^{N-n+1} + e^{-cN} & \text{ if } N > n, \\
        C\eps + e^{-cN} & \text{ if } N = n,
    \end{cases}
\end{equation}
for all $\eps > 0$. Furthermore in the square case the assumption of mean 0 variance 1 entries can be replaced with an upperbound on the Hilbert-Schmidt norm ($\norm{A}_{HS} := \sqrt{\sum_{i,j}a_{ij}^2}$) of order $n$. The key ingredient in \cite{GL} was the use of new `lattice-like' $\eps$ nets, which allow one to control the distance-approximating guarantee of the net in terms of regularized Hilbert-Schmidt norm, $B_\kappa(A)$ (see \ref{eq:regularized-HS} and \refT{thm:net-full}), instead of the operator norm $\norm{A}$.
Subsequently in \cite{LTV}, Livshyts, Tikhomirov and Vershynin, using a new distance theorem for inhomogenous random matrices, obtained \eqref{eq:GL-rectangular} in the square case without any identical distribution assumptions. 
\par
For the purposes of applications, estimate \eqref{eq:GL-rectangular} applies to a much wider range of random matrix models than i.i.d. subgaussian. However, in contrast to \eqref{eq:RV-rectangular}, the main term in \eqref{eq:GL-rectangular} contains an additional factor of $\log(1/\eps)^{N-n+1}$ in the non-square case. Given the difference in estimates for non-square matrices and the lack of identical distribution assumptions in the square case, it is natural to try and extend \eqref{eq:GL-rectangular} to rectangular matrices without i.i.d. rows and determine conditions sufficient for deducing \eqref{eq:RV-rectangular} in the non-square case. Towards this goal we prove the following two theorems.
\begin{theorem}\label{thm:sv-1}
    Let $a > 0, b \in (0,1)$. There exist positive constants $C,c$, depending on $a$ and $b$ such that the following is true: Let $A \in \R^{N \times n}$ be a random matrix with independent entries $\xi$ that satisfy $\E \xi = 0, \E \xi^2 = 1 , \sup_{z \in \R}\Pr\lrpar{|\xi-z| < a} < b$. Then there exists positive $C,c$ depending on $a$ and $b$ such that
    \[
    \Pr\lrpar{\sigma_n(A) \leq \eps\lrpar{\sqrt{N+1} - \sqrt{n}}} \leq (C\eps\log(1/\eps))^{N-n+1} + e^{-cN}
    \]
\end{theorem}
\begin{theorem}\label{thm:sv-2}
    Let $R \ge 1,\beta > 0$. There exists constants $C,c > 0$, depending on $R$ and $b$ such that the following is true: Let $A \in \R^{N \times n}$ be a random matrix with independent entries $\xi$ that satisfy $\E \xi = 0, \E \xi^2 = 1 , \E |\xi|^{2+\beta} \le R$. Then there exists positive $C,c$ depending on $b$ and $R$ such that
    \[
    \Pr\lrpar{\sigma_n(A) \leq \eps\lrpar{\sqrt{N+1} - \sqrt{n}}} \leq (C\eps)^{N-n+1} + e^{-cN}
    \]
\end{theorem}
In fact our results hold in greater generality (see \refT{thm:sv-tall}, \refT{thm:sv-general1}, \refT{thm:sv-general2}). \textbf{In comparison to previous results} our work improves the state of the art in a number of ways.
Our first theorem, \refT{thm:sv-1}, says that the result of Livshyts for rectangular matrices \cite{GL} holds without any identical distribution assumptions. In particular the assumption about identically distributed rows is not necessary. Our second theorem, \refT{thm:sv-2}, says that the result of Rudelson and Vershynin \cite{RV} holds for matrices with independent entries having mean 0 variance 1 and bounded $2+\al$ moment, for some $\al > 0$. Furthermore once  $N-n$ is of order $n/(\log n)$, a secondary theorem of ours, \refT{thm:sv-tall}, say that the assumptions of mean 0, variance 1 and bounded $2+\al$ moments can be replaced with uniformly anti-concentrated entries and an upperbound on the Hilbert-Schmidt norm of order $Nn$. To our knowledge \refT{thm:sv-2}  is the first such result that recovers \eqref{eq:RV-rectangular} for inhomogenous heavy-tailed matrices under all aspect ratios. We also note that in the context of singular value estimates the use of a $2+\al$ assumption previously appeared in \cite{Litvak2}. In that paper, Litvak and Rivasplata showed that \eqref{eq:Litvak1-rectangular} applied to inhomogenous matrices satisfying assumptions different from ours, even allowing for sparse matrices, but requiring $N-n$ to be of order $n/(\log n)$. In the square case a tail estimate of the form $C(\eps + n^{-\beta/2})$ was given. In particular the intermediate range of aspect ratios was not considered. Since \cite{LTV} and our results imply that the $2+\al$ moment assumption is not necessary to obtain \eqref{eq:RV-rectangular} for inhomogenous heavy-tailed matrices outside the intermediate range, we believe \eqref{eq:RV-rectangular} should hold for such matrices in this range.  
\par
\textbf{Proof strategy and key difficulties:} Recall that our goal is to show that $\inf_{x \in\bS^{n-1}} \norm{Ax}_2$ is unlikely to be small for any unit vector $x$. In the case where the matrix is very tall ($N-n$ is at least of order $n$), it is understood that a lower tail estimate on $\sigma_n(A)$ should be exponentially small. In the case of matrices with i.i.d. entries, Tikhomirov showed in \cite{tall} that estimate $\eqref{eq:Litvak1-rectangular}$ holds so long as $N-n$ is at least of order $n$ and that the entries of $A$ are uniformly anti-concentrated. Under our assumptions Livshyts showed in \cite{GL} an identical estimate when $N-n \geq Mn$ for $M > 0$ sufficiently large. 
The hard case, in fact, is when $A$ is square-like ($1 \leq N-n < \lambda n$ for $\lambda > 0$ small). In this range we would like to follow the approaches of Rudelson and Vershynin \cite{RV} and Livshyts \cite{GL}. This involves decomposing the sphere (see \refD{def:decomposition}) into sets of compressible vectors, $\comp(\delta,\rho)$, and incompressible vectors, $\incomp(\delta,\rho)$, and showing that $\inf_{x \in \comp(\delta,\rho)}\norm{Ax}_2$ and $\inf_{x \in \incomp(\delta,\rho)}\norm{Ax}_2$ are unlikely to be small using certain $\eps$-net arguments. Because we are in the inhomogenous setting we make use of the lattice-like nets that appeared previously in \cite{GL} and \cite{LTV} for analyzing the smallest singular values of inhomogenous random matrices. 

By previous results in \cite{GL}, it is known that $\inf_{x \in \comp(\delta,\rho)}\norm{Ax}_2$ is likely large. Thus the crux of the matter is lower bounding $\inf_{x \in \incomp(\delta,\rho)}\norm{Ax}_2$. When $N-n \leq \lambda n/(\log n)$ we follow the invertibility via distance approach (see \refL{lem:invert-via-dist}). In particular we must show that $\inf_{x \in \spread_d}\norm{W x}_2$ is likely not small, where $W$ is the projection of a certain submatrix of $A$, by showing that $\inf_{y \in \cN} \norm{Wy}_2$ is likely not small, where $\cN$ is a net over $\spread_d$. This requires a way to bound the probability that $\norm{Wy}_2$ is small for a given $y$. In particular we need a distance theorem for inhomogeneous matrices. However, unlike the distance theorem used in \cite{LTV} which allowed for the subspace to be spanned by the columns of an $n \times (n-1)$ inhomogenous random matrix, we require a distance theorem with much larger aspect ratio. Therefore we make use of the distance theorem, \refT{thm:distance}, recently proved in \cite{distance} for inhomogenous matrices with $N-n$ of order at most $N/(\log N)$.
\par 
In conjunction with previously developed tools, \refT{thm:distance} is enough to prove \refT{thm:sv-1}. Obtaining \refT{thm:sv-2}, however, requires further improving the distance-approximating guarantee of lattice-like nets. Because the guarantee of our net is controlled by $\cB_\kappa(W)$ (see \refD{eq:regularized-HS} and \refT{thm:net-full}) we need a sufficiently strong deviation inequality for $\cB_\kappa(W)$. A deviation inequality for $\cB_\kappa(W)$ was given in \cite{GL}, but only allows us to recover \eqref{eq:GL-rectangular}. Consequently, we prove a stronger deviation inequality for matrices whose columns satisfy a certain moment inequality, \refL{lem:hs2pluseps}, and show that the inequality is applicable to $W$ under the assumptions of \refT{thm:sv-2}.    
\par 
In the case where $N-n > \lambda n/(\log n)$ we use a more basic $\eps$-net argument. In particular we take a suitable lattice-like net $\cN$ over $\incomp(\delta,\rho)$, argue that $\inf_{y \in \cN} \norm{Ay}_2$ is large, and use the net properties to deduce that $\inf_{x \in \incomp(\delta,\rho)} \norm{Ax}_2$ is large. We remark that this approach is motivated by the proof of \eqref{eq:Litvak1-rectangular} for sufficiently tall subgaussian matrices in \cite{Litvak1}. 
\par
We now end this section with an outline of the rest of the paper. In \refS{Sec:Pre} we discuss some preliminaries, definitions and notation.
We will also introduce relevant technical results related to anti-concentration and $\eps$-nets. In \refS{Sec:UAC} we prove uniform anti-concentration for random variables satisfying certain moment conditions. In \refS{Sec:moments} we prove lemmas about moments of projected vectors necessary for our implementation of the invertibility via distance approach. In \refS{Sec:tall} we prove smallest singular value estimates in the case where $N-n$ is at least of order $n/(\log n)$. Finally in \refS{Sec:almost square} we prove smallest singular value estimates in the case where $N-n$ is at most of order $n/(\log n)$.
\section*{Acknowledgements}
The authors would like to thank Galyna Livshyts for numerous helpful discussions and feedback. The second author would also like to thank Nicholas Cook and Konstantin Tikhomirov for helpful discussions. The first author was supported by a NSF GRFP.
The second author was partially supported by a Georgia Tech ARC-ACO Fellowship and NSF-BSF DMS-2247834 while working on this paper.   
\section{Preliminaries}\label{Sec:Pre} 
In this paper the parameters $a,\beta > 0, b \in (0,1)$ and $R \ge 1$ are taken to be absolute constants. For $R$ and $\beta$, beyond satisfying their constraints, their choice is arbitrary. In the case of $a$ and $b$ we will either assume that their choice is arbitrary subject to their constraints or that they are some functions of $R$ and $\beta$ (See \refS{Sec:UAC} for details). However, their value/choice is to be fixed throughout the entire paper. We will use $c,c_1,c_2,\cdots,$ etc. to denote a sufficiently small absolute constant that depends on $a,b,R$ and $\beta$ (recall that $a,b,R$ and $\beta$ are absolute constants) and we will use $C,C_1,C_2,\cdots,$etc. to denote a sufficiently large absolute constant that depends on $a,b,R$, and $\beta$. Furthermore $C$ and $c$ may appear multiple times in the same context with different meaning each time, such as in an inequality chain, so as to simplify expressions and collect terms. 
For notation purposes we write $x \lsim y$ to mean $x/y$ is at most a constant depending only on $a,b$, $\beta$ and $R$ and write $x \approx y$ to mean $x \lsim y \lsim x$.  
We say $x \ll y$ if $x/y$ is at most a sufficiently small constant depending only on $a,b$, $\beta$ and $R$. For any matrix $A \in \R^{N \times n}$ we write $\norm{A}_{HS}$ for the Hilbert-Schmidt norm of $A$, which is defined as $\norm{A}_{HS} = \sqrt{\sum_{i,j} a_{ij}^2}$.

We now list some assumptions. We say that a random variable $\xi$ is \textit{normalized} if $\E \xi = 0$ and $\E \xi^2 = 1$, is \textit{uniformly anti-concentrated} if $\sup_{u \in \R} \bP[|u - \xi| \leq a] \leq b$ and satisfies the \textit{higher moment assumption} if $\E |\eta|^{2+\beta} \le R$. We say that a random vector $X \in \R^n$ satisfies the second moment assumption if $\E\norm{Z}_2^2 \ll n^2$. We say that a random matrix $A \in \R^{N \times n}$ satisfies the Hilbert-Schmidt norm assumption if it satisfies $\E\norm{A}_{HS}^2 \lsim Nn$.
We now recall the concept of a decomposition of the sphere into `compressible' and `incompressible' vectors. This notion was introduced by Rudelson and Vershynin in \cite{RV-square}, although a similar idea was used in an earlier work by Litvak, Pajor, Rudelson and Tomczak-Jaegermann\cite{Litvak1}. 
\begin{definition}\label{def:decomposition}
    Let $\delta,\rho \in (0,1)$. We say that a vector $x \in \bS^{n-1}$ is $(\delta,\rho)$-compressible if there there exists a vector $y \in \bS^{n-1}$ with at most $\delta n$ non-zero entries for which 
    \[
    \dist(x,y) \leq \rho
    \]
We write $\comp(\delta,\rho)$ to denote the set of ($\delta,\rho$)-compressible vectors and define $\incomp(\delta,\rho) := \bS^{n-1}\setminus \comp(\delta,\rho)$.
\end{definition}
There are two important properties about the sphere decomposition that we will use. The first property is that incompressible vectors contain a large subset of coordinates that are `spread'.  
\begin{proposition}[Lemma 2.5 \cite{RV}]\label{prop:incomp}
    Let $x \in \incomp(\delta,\rho)$. Then there exists a subset of indices $J \subset \{1,2,\cdots,n\}$ of size $|J| \geq \frac{1}{2}\rho^2\delta n$ such that 
\begin{equation}\label{eq:spread}
    \frac{\rho}{\sqrt{2n}} \leq |x_j| \leq \frac{1}{\sqrt{\delta n}}~\text{ for all } j \in J.
    \end{equation}
\end{proposition}
The second property is that rectangular random matrices send compressible vectors to non-small vectors. In particular compressible vectors avoid the matrix's null space.
 \begin{lemma}[Lemma 5.3 \cite{GL}]\label{lem:comp}
 Let $A \in \R^{N \times n}$ be a random matrix that satisfies the Hilbert-Schmidt norm assumption and has independent entries that are uniformly anti-concentrated. Then  
 \begin{equation}\label{eq:comp}
 \bP\lrpar{\inf_{x \in comp(\delta,\rho)} \norm{Ax}_2 \leq c\sqrt{N  }} \leq e^{-cN}.
 \end{equation}
 where $\delta,\rho \in (0,1)$ and $c > 0$ depend only on $a$ and $b$.
 \end{lemma}
 So as to simplify our arguments we  will also fix the choice of $\delta$ and $\rho$, as guaranteed by \refL{lem:comp}, for the remaining sections.
Next, we introduce some technical results related to anti-concentration. The first such result is the following `tensorization' lemma.
\begin{lemma}[Lemma 2.2 \cite{RV-square}]\label{lem:tensorization}
Let $\xi_1,\cdots,\xi_n$ be a collection of independent random variables satisfying $\bP(|\xi_i| \le \eps) \le K\eps$ for all $\eps > \eps_0 > 0$.
Then 
\[
\bP\lrpar{\sum_{i = 1}^n |\xi_i|^2 \leq \eps^2n} \leq (CK\eps)^n
\]
for all $\eps > \eps_0$, for some $C > 0$.
\end{lemma}
The next few results pertain to anti-concentration of random variables and vectors. The first is a theorem of Rogozin \cite{rogozin}.
\begin{theorem}\label{thm:rogozin}
    Let $v \in \R^n$ be a random vector with independent entries that are uniformly anti-concentrated. Then there exists positive constants $C,c$ depending on $a$ and $b$ such that the following is true: For every $u \in \R^n$ and $\eps > c\norm{u}_\infty$ one has 
    \[
    \sup_{z \in \R}\Pr\lrpar{\abs{\ip{u}{v} - z} < \eps} \leq \frac{C\eps}{\norm{u}_2}.
    \]
\end{theorem}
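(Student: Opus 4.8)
The plan is to deduce this directly from the Kolmogorov--Rogozin inequality, which is the ``product form'' of Rogozin's estimate \cite{rogozin}: there is an absolute constant $C_0 > 0$ such that for independent random variables $Y_1, \dots, Y_n$ and any scales $0 < \lambda_i \le \lambda$,
\[
Q\Bigl(\sum_{i=1}^n Y_i,\ \lambda\Bigr) \ \le\ \frac{C_0\, \lambda}{\Bigl(\sum_{i=1}^n \lambda_i^2\,\bigl(1 - Q(Y_i, \lambda_i)\bigr)\Bigr)^{1/2}},
\]
where $Q(Y,\lambda) := \sup_{z \in \R}\Pr\bigl(|Y - z| \le \lambda\bigr)$ denotes the L\'evy concentration function. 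First I would apply this with $Y_i = u_i v_i$, discarding harmlessly the indices with $u_i = 0$; the essential choice is of the scales, namely $\lambda_i := a|u_i|$ and $\lambda := \eps$. Setting $c := a$, the hypothesis $\eps > c\norm{u}_\infty$ forces $\lambda_i = a|u_i| \le a\norm{u}_\infty < \eps = \lambda$, so the scales are admissible.

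With this choice, \refA{ass:UAC} gives for each $i$
\[
Q\bigl(u_i v_i,\, a|u_i|\bigr) \ =\ \sup_{z \in \R}\Pr\bigl(|v_i - z/u_i| \le a\bigr) \ =\ \sup_{w \in \R}\Pr\bigl(|v_i - w| \le a\bigr) \ \le\ b,
\]
hence $1 - Q(u_i v_i, \lambda_i) \ge 1 - b$ and $\sum_i \lambda_i^2\,\bigl(1 - Q(u_i v_i, \lambda_i)\bigr) \ge (1-b)\,a^2\norm{u}_2^2$. Substituting into the displayed inequality yields
\[
\sup_{z \in \R}\Pr\bigl(|\ip{u}{v} - z| < \eps\bigr) \ \le\ Q\bigl(\ip{u}{v},\, \eps\bigr) \ \le\ \frac{C_0\,\eps}{a\sqrt{1-b}\,\norm{u}_2},
\]
which is the assertion with $C := C_0/(a\sqrt{1-b})$ and $c := a$, both depending only on $a$ and $b$.

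If one prefers a self-contained argument rather than citing Rogozin as a black box, I would instead run the classical Esseen route: $Q(S,\eps) \lesssim \eps \int_{-1/\eps}^{1/\eps} |\phi_S(t)|\,\dd t$ for $S = \ip{u}{v}$, factor $\phi_S = \prod_i \phi_{u_i v_i}$, bound $|\phi_{u_i v_i}(t)| \le \exp\bigl(-\tfrac12\bigl(1 - |\phi_{u_i v_i}(t)|^2\bigr)\bigr)$, and use $1 - |\phi_{u_i v_i}(t)|^2 = \E\bigl[1 - \cos(t u_i \tilde v_i)\bigr]$ for a symmetrization $\tilde v_i$ of $v_i$. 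The only delicate point there is a pointwise estimate $1 - |\phi_{u_i v_i}(t)|^2 \gtrsim (t u_i)^2$, with implied constant depending only on $a,b$, valid once $|t u_i|$ is bounded: this follows by splitting on whether $|t u_i|$ is small or comparable to $1$ relative to the spread scale of $\tilde v_i$ --- using $1 - \cos x \ge \tfrac{2}{\pi^2} x^2$ on $[-\pi,\pi]$ together with the variance lower bound that \refA{ass:UAC} forces on $\tilde v_i$ in the former regime, and that $\E\cos(t u_i \tilde v_i)$ is bounded away from $1$ in the latter. Integrating $\prod_i e^{-c (t u_i)^2} = e^{-c t^2 \norm{u}_2^2}$ over $t \in \R$ then supplies the $\norm{u}_2^{-1}$ factor. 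I expect no genuine obstacle here: the only thing to get right is keeping the two scales separate --- the scale $\lambda_i \sim |u_i|$ at which $v_i$ is anti-concentrated versus the scale $\lambda \sim \eps$ of the event --- since a single-scale concentration bound becomes vacuous precisely when $\norm{u}_\infty \ll \eps$.
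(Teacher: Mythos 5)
The paper states Theorem~\ref{thm:rogozin} as a result of Rogozin~\cite{rogozin} and gives no proof of its own, so there is no argument to compare against. Your deduction from the Kolmogorov--Rogozin concentration-function inequality is correct and is essentially what the citation points to: the per-coordinate scales $\lambda_i = a\abs{u_i}$ together with the admissibility requirement $\lambda_i \le \lambda = \eps$ are exactly what produces the hypothesis $\eps > c\norm{u}_\infty$ with $c = a$, and \refA{ass:UAC} directly supplies the uniform lower bound $1 - Q(u_i v_i, \lambda_i) \ge 1-b$ needed to extract the $\norm{u}_2^{-1}$ factor.
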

The second is a lemma of Tikhomirov and Rebrova \cite{rebrova} that is itself a direct consequence of \refT{thm:rogozin}.
\begin{lemma}[Lemma 4.7 \cite{rebrova}]\label{lem:rebrova}
    Let $X \in \R^n$ be a random vector with independent entries that are uniformly anti-concentrated. Then there exists $u > 0$ and $v \in (0,1)$ depending on $a$ and $b$ such that
    \[
    \sup_{x \in \bS^{n-1}}\sup_{z \in \R} \Pr\lrpar{\abs{\ip{X}{x} - z} < u} < v.
    \]
\end{lemma}
The last result is a `distance theorem' for inhomogenous random rectangular matrices. 
\begin{theorem}[Theorem 1\cite{distance}]\label{thm:distance}
Let $X \in \R^n$ be a random vector that satisfies the second moment assumption and has independent entries that are uniformly anti-concentrated. Let $A \in \R^{n \times (n-d)}$ be a
be a random matrix satisfying the Hilbert-Schmidt norm assumption with independent entries that are uniformly anti-concentrated. Then there exists positive constants $C,c,\lambda$ depending on $a$ and $b$ such that the following is true: 
Suppose $1 \leq d \leq \lambda n/\log(n)$. Let $H$ denote the column span of $A$ and $P_{H^\perp}$ be the orthogonal projection operator onto $H^\perp$. Then
\begin{equation}\label{eq:distance}
\sup_{v \in \R^n}\Pr\lrpar{\norm{P_{H^\perp}X - v}_2 \leq t\sqrt{d}} \leq (Ct)^d + e^{-cn}.
\end{equation}
\end{theorem}
We note that in \cite{distance} \refT{thm:distance} was actually proven for $a = 1$. This is equivalent to the above formulation since \eqref{eq:distance} is equivalent to $\sup_{v \in \R^n} \bP(\norm{P_{H^\perp}(X/a) - v/a}_2 \le (t/a)\sqrt{d})$ and replacing $t$ with $t/a$ in the right hand side of \eqref{eq:distance} means replacing $C$ with $C/a$, which is a constant only depending on $a$ and $b$.
We now introduce some results regarding the existence of certain lattice-like $\eps$-nets. Such nets have been used to obtained smallest singular value estimates for random rectangular matrices with i.i.d. rows \cite{GL} and for random square matrices with no i.i.d. assumptions at all \cite{LTV}. We will require two theorems about the existence of such nets. 
The first theorem is a slightly modified version of Theorem 3.9 from \cite{LTV}, which itself is an immediate consequence of Remark 3.6, Theorem 3.7 and Lemma 3.8 of \cite{LTV}.
\begin{theorem}\label{thm:net-basic}\cite{LTV}
    Fix $n \in \N$. Let $\eps \in (0,1/4)$. Then there exists a positive constant $C$  such that for every $N \in \N$ and every random $N \times n$ matrix $A$ with independent columns, with failure probability at most $e^{-2n}$ there exists a net $\Lambda(\eps) \subset \frac{3}{2}B_2^n \setminus \frac{1}{2}B_2^n$ of size $(C/\eps)^n$ such that for every $x \in \bS^{n-1}$ there exists $y \in \Lambda(\eps)$ such that
    \begin{equation}\label{eq:net-basic}
    \norm{x-y}_\infty \leq \frac{\eps}{\sqrt{n}},~\norm{A(x-y)}_2 \leq \frac{\eps\sqrt{2\E\norm{A}_{\text{HS}}^2}}{\sqrt{n}}.
    \end{equation}
\end{theorem} 

The second theorem is a major net theorem of \cite{GL}. 
Given $\kappa > 1$ we define the regularized Hilbert-Schmidt norm of a $N \times n$ matrix $M$, introduced in $\cite{GL}$, as
\begin{equation}\label{eq:regularized-HS}
\cB_\kappa(M) = \min\left\{\sum_{i = 1}^n \alpha_i^2\norm{M_i}_2^2~:~\alpha \in [0, 1]^n, \prod_{i=1}^n \alpha_i \ge \kappa^{-n}\right\}.
\end{equation}
We may then state the following theorem, which asserts the existence of a deterministic net close to the sphere providing an approximation for any deterministic matrix with precision depending on that matrix's regularized Hilbert-Schmidt norm.
\begin{theorem}[Theorem 3 \cite{GL}]\label{thm:net-full}
    Fix $n \in \N$. Let $S \subset \R^n$. Let $\gamma \in (2,\sqrt{n}), \eps \in (0,1/(10\gamma)),\kappa > 1$. Let $N(S,\eps B_2^n)$ denote the minimum number of euclidean balls of radius $\eps$ necessary to cover $S$. Then there exists a net $\cN \subset S + 4\eps \gamma B_2^n$ of size 
    \begin{equation}\label{eq:net-full}
    \size{\cN} =
    \begin{cases}
        N(S,\eps B^n_2) \cdot (C_1\gamma)^{C_2n/\gamma^{0.08}}, &\text{if } \kappa \leq \frac{\log 2}{\gamma^{0.08}},\\
        N(S,\eps B^n_2) \cdot (C\kappa)^n(\log \kappa)^{n-1}, &\text{if } \kappa \geq \frac{\log 2}{\gamma^{0.08}},
    \end{cases}
    \end{equation}
    such that for every $N \in \N$ and for every (deterministic) $N \times n$ matrix $A$ and
    every $x \in \bS^{n-1}$ there exists $y \in \cN$ such that 
    \[
    \norm{A(x-y)}_2 \leq  2\eps\gamma\sqrt{\frac{\cB_\kappa(A)}{n}},
    \]
    where $C,C_1,C_2 > 0$ are absolute constants. 
\end{theorem}

\section{Uniform anti-concentration from moment assumptions}\label{Sec:UAC}
Note that, unlike \refT{thm:sv-1}, \refT{thm:sv-2} does not directly assume uniform anti-concentration of the entries of $A$. Instead one can deduce uniform anti-concentration from the moment assumptions. Note that only having assumptions on the first two moments is not enough. Indeed, the random variable $Z_n := (1-(\sqrt{n}-1)Y)X$ with $X$ a Rademacher random variable and $Y$ an independent Bernoulli with parameter $1/n$ has mean 0 and variance 1 but converges to 0 almost surely. By assuming a bounded $2+\beta$ moment we are able to truncate our random variable while approximately preserving its mean and variance, allowing us to derive uniform anti-concentration.
We begin by considering the normalized case. 
\begin{lemma}\label{lem:uac-moment}
    Let $\xi \in \R$ be a normalized random variable satisfying the higher moment assumption.
    Then $\xi$ satisfies 
    \begin{equation}\label{eq:UAC:1}
    \sup_{z \in \R} \Pr\lrpar{\abs{\xi-z} \leq 1/4} \leq p,
    \end{equation}
    with $p \in (0,1)$ depending only on $\al$ and $R$. 
\end{lemma}

\begin{proof}
We take $m$ to be a positive value that we will specify later. We begin by making note of the following tail-estimates for $\xi$ that follow from Markov's inequality:
\begin{equation}\label{eq:tail-bound}
\begin{split}
\int_{m^2}^\infty \Pr\lrpar{\xi^2 > t}~dt 
&\leq \int_{m^2}^\infty \frac{R}{t^{1+\al/2}}~dt 
= \frac{2R}{\al m^\al}.\\
\int_{m}^\infty \Pr\lrpar{\abs{\xi} > t}~dt 
&\leq \int_{m}^\infty \frac{1}{t^2}~dt 
= \frac{1}{m}.
\end{split}
\end{equation}
Next, we define $\xi_1 := \xi\one_{\abs{\xi} \le m}, \xi_2  := \xi\one_{\abs{x} > m}$. Since $\xi = \xi_1$ whenever $\abs{\xi} \le m$ we have that 
\[
\bP(\abs{\xi - z} \le 1/4) \le \bP(\abs{\xi_1 - z} \le 1/4) + \bP(|\xi| > m) \le \bP(\abs{\xi_1 - z} \le 1/4) + R/m^{2+\beta}.
\]
We now pick $m$ sufficiently large so that $|\E\xi_1 - \E\xi| \le 1/20$ and $|\E\xi_1^2 - \E \xi^2| \le 1/8$. 
We first address the $|\E\xi_1 - \E\xi|$ case. Since $\E\xi = 0, \E \xi_1 = -\E \xi_2$ and $|\E \xi_2| \le \E |\xi_2|$ it suffices to show that $\E|\xi_2| \le 1/20$. Note that  
\[
\E\abs{\xi_2} =\int_0^\infty \Pr\lrpar{\abs{\xi_2} > t}~dt
                \leq m \Pr\lrpar{\abs{\xi} > m} + \int_m^\infty \Pr\lrpar{\abs{\xi} > t}~dt \\
                \leq \frac{1}{m} + \frac{1}{m} = \frac{2}{m},
\]
with the second to last inequality following from the second inequality in \eqref{eq:tail-bound}. Therefore we need that $2/m \le 1/20$ or $m \ge 40$. The argument for $|\E \xi_1^2 - \E \xi^2|$ is similar. Since $\E\xi^2 = \E\xi_1^2 + \E\xi_2^2$ and $\E \xi^2 = 1$ it suffices to show that $\E\xi_2^2 \le 1/8$. Note that
\begin{align*}
    \E \xi_2^2 &= \int_0^\infty \Pr\lrpar{\xi_2^2 > t}~dt,\\
                &= m^2 \Pr\lrpar{\xi_2^2 > m^2} + \int_{m^2}^\infty \Pr\lrpar{\xi_2^2 > t}~dt, \\
                &\leq m^2 \Pr\lrpar{\xi^2 > m^2} + \int_{m^2}^\infty \Pr\lrpar{\xi^2 > t}~dt\\
                &\leq \frac{R}{m^{\al}} + \frac{2R}{\al m^{\al}} = \frac{R(2+\beta)}{\beta m^\beta},
\end{align*}
with the last inequality following from the first inequality in \eqref{eq:tail-bound}. Therefore we need that $R(2+\beta)/(\beta m^\beta) \le 1/8$ or $m \ge (8R(2+\beta)/\beta)^{1/\beta}$. Assuming we have chosen $m$ so as to satisfy these moment constraints, we case on $z$. Write $p_z := \bP(|\xi_1 - z| \le 1/4)$. Suppose $|z| \le 1/2$. Then
 \[
 7/8 \leq \E \xi_1^2 \leq (3/4)^2p_z + m^2(1-p_z) \leq 9p_z/16 + m^2(1-p_z),
 \]
 with the second to last inequality using the fact that 
$|\xi_1| \le m$ almost surely. Isolating for $p$ gives
\[
p_z(m^2 - 9/16) \leq m^2 - 7/8 \implies p_z \leq \frac{m^2 - 7/8}{m^2 - 9/16} = 1 - \frac{5}{16(m^2 -9/16)} \leq 1 - \frac{5}{16m^2},
\]
with the only assumption used being $m > 3/4$. Suppose now that $|z| > 1/2$. Without loss of generality we assume that $z > 1/2$. Note that 
\[
1/20 \ge |\E \xi_1| \ge \E \xi_1 \ge (1/4)p_z + (-m)(1 - p_z),
\]
with the last inequality using the fact that 
$|\xi_1| \le m$ almost surely. Isolating for $p_z$ gives
\[
 p_z(m+1/4) \leq (m + 1/20) \implies p_z \leq \frac{m+1/20}{m+1/4} = 1 - \frac{1/5}{m+1/4} < 1 - \frac{1}{5(m+1)}.
\]
Now, if $m \ge 3$ then $5/(16m^2) \le 1/(5(m+1))$. In addition $R/m^{\beta + 2} \le 1/(16m^2)$ when $m \ge (16R)^{1/\beta}$.
Thus we conclude \eqref{eq:UAC:1} with $p = 1 - 1/(2m)^2$ and $m = \max\{40,(8R(2+\beta)/\beta)^{1/\beta},(16R)^{1/\beta}\}$.
\end{proof}
We now give a slight generalization of \refL{lem:uac-moment}.
\begin{corollary}\label{cor:uac-moment}
    Let $\xi$ be a random variable. Define $\bar{\xi} := \xi - \E \xi$. If $\E\bar{\xi}^2 > 0$ and $\E|\bar{\xi}|^{2+\beta} < \infty$. Then 
    \[
    \sup_{z \in \R} \Pr\lrpar{\abs{\bar{\xi}-z} \leq \sqrt{\E\bar{\xi}^2}/2} \leq p,
    \]
    where $p \in (0,1)$ depends only on $\al$ and $\E|\bar{\xi}|^{2+\beta}/(\E\bar{\xi}^2)^{1+\beta/2}$.
\end{corollary}
\begin{proof}
    Note that $\bar{\xi}/\sqrt{\E\bar{\xi}^2}$ has mean 0, variance 1, and $2+\beta$ moment $\E|\bar{\xi}|^{2+\beta}/(\E\bar{\xi}^2)^{1+\beta/2} < \infty$.
    By \refT{lem:uac-moment} $\bar{\xi}$ satisfies:
    \[
    \sup_{z \in \R} \Pr\lrpar{\abs{\bar{\xi}-z} \leq /\sqrt{\E\bar{\xi}^2}/4} = 
    \sup_{z \in \R} \Pr\lrpar{\abs{\bar{\xi}/\sqrt{\E\bar{\xi}^2}-z} \leq 1/4} \leq b,
    \]
    where $b$ depends only on $\al$ and $\E|\bar{\xi}|^{2+\beta}/(\E\bar{\xi}^2)^{1+\beta/2}$. 
\end{proof}
To conclude this section we note that in order for $\xi$ with mean 0 to be uniformly anti-concentrated it is necessary that $\E|\xi|^2 \gsim 1$. Indeed, assuming uniform anti-concentration one has 
\[
\E|\xi|^2 = \E|\xi - \xi'|^2 \ge a^2\E_{\xi'}\bP_\xi(|\xi - \xi'| \ge a |\xi') \ge a^2\sup_{z \in \R}\bP_\xi(|\xi - z| \ge a) \ge a^2(1-b) \gsim 1,
\]
where $\xi'$ denotes an independent copy of $\xi$.
\section{Moments of projections}\label{Sec:moments}
We now state some results about the moments of the projection of a random vector $X$ onto a fixed subspace. These will be used in analyzing the smallest singular value of almost square matrices. 
The first result says that the norm of a projected random vector with normalized entries scale with the dimension of the subspace it is projected onto.
\begin{lemma}[Lemma 8.3 \cite{GL}]\label{lem:2-moment}
    Let $X \in \R^n$ be a random vector with independent entries $\xi$ that are normalized random variables. Let $H$ be a subspace of co-dimension $d$. Then 
    \[
    \frac{\E\norm{P_{H^\perp} X}_2^2}{\E\norm{X}_2^2} = \frac{d}{n}
    \]
\end{lemma}
The second result says that $\norm{P_{H^\perp}X}_2$ satisfies a reverse Jensen's inequality when the vector's entries do as well.  
\begin{lemma}\label{lem:HS-proj}
Let $X \in \R^n$ be a random vector with independent entries $\xi$ satisfying $\E \xi = 0, \E \xi^2 = 1, \E |\xi|^{2+\beta} \le R$. Let $H$ be a subspace of co-dimension $d$. Then 
\begin{equation}\label{eq:moment:main}
\E\norm{P_{H^\perp} X}_2^{2+\al} \leq Cd^{1+\al/2},
\end{equation}
where $C$ is a positive constant depending only on $R$ and $\al$.
\end{lemma}
The proof of this will require the following inequality due to Rosenthal \cite{rosenthal}.
\begin{lemma}\cite{rosenthal}\label{lem:rosenthal}
    Let $\xi_1,\cdots,\xi_n$ be a set of independent random variables $\xi$ that have mean 0 and finite $p$th moment (i.e. $\E|X_i|^p < \infty$). Then there exists $C > 0$, depending only on $p$, such that 
    \begin{equation}\label{eq:rosenthal}
        \E\left\lvert\sum_{i = 1}^n X_i\right\rvert \leq C \cdot 
        \max \lrpar{\sum_{i = 1}^n \E|X_i|^p,\lrpar{\E\left\lvert
        \sum_{i = 1}^n X_i\right\rvert^2}^{p/2}}
    \end{equation}
\end{lemma}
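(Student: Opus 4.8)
The plan is to reduce the inequality to an estimate for sums of independent \emph{nonnegative} random variables, and then prove that estimate directly by induction on $\lceil p/2\rceil$. (Here the left-hand side is read as $\E\Bigabs{\sum_i X_i}^p$.) First I would symmetrize: let $X_1',\dots,X_n'$ be independent copies of $X_1,\dots,X_n$ and set $\tilde X_i := X_i - X_i'$. Since $\E X_i = 0$, Jensen's inequality (conditioning on the primed variables) gives $\E\Bigabs{\sum_i X_i}^p \le \E\Bigabs{\sum_i\tilde X_i}^p$, and as each $\tilde X_i$ is symmetric the sequence $\tilde X_1,\dots,\tilde X_n$ has the same joint law as $\eps_1\tilde X_1,\dots,\eps_n\tilde X_n$ for independent Rademacher signs $\eps_i$. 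Conditioning on $(\tilde X_i)$ and applying Khintchine's inequality to the Rademacher sum then yields
\[
\E\Bigabs{\sum_i X_i}^p \le K_p\,\E\Bigpar{\sum_i\tilde X_i^2}^{p/2}
\]
for a constant $K_p$ depending only on $p$. (Alternatively one may skip symmetrization and apply the Burkholder--Davis--Gundy inequality to the martingale $\bigl(\sum_{i\le k}X_i\bigr)_k$ to get $\E\bigabs{\sum_i X_i}^p \le c_p\,\E\bigpar{\sum_i X_i^2}^{p/2}$ directly.) Either way, it remains to bound $\E\bigpar{\sum_i Y_i}^{q}$ for independent nonnegative $Y_i$ and $q = p/2 \ge 1$.

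The core of the argument is the claim that, for independent nonnegative $Y_1,\dots,Y_n$ with finite $q$-th moments and any real $q\ge 1$,
\[
\E\Bigpar{\sum_i Y_i}^{q} \le C_q\lrsqpar{\Bigpar{\sum_i\E Y_i}^{q} + \sum_i\E Y_i^{q}},
\]
with $C_q$ depending only on $q$. Writing $S := \sum_i Y_i$ and $S_{(i)} := S - Y_i$, I would start from $S^{q} = \sum_i Y_i\,S^{q-1}$, bound $S^{q-1} = (Y_i + S_{(i)})^{q-1} \le 2^{(q-2)_+}\bigpar{Y_i^{q-1} + S_{(i)}^{q-1}}$ (subadditivity of $t\mapsto t^{q-1}$ when $q\le 2$, convexity when $q\ge 2$), and use that $Y_i$ is independent of $S_{(i)}$ with $S_{(i)}\le S$ to obtain the recursion
\[
\E S^{q} \le 2^{(q-2)_+}\lrsqpar{\sum_i\E Y_i^{q} + \Bigpar{\sum_i\E Y_i}\,\E S^{q-1}}.
\]
For $q\in[1,2]$ the cross term is controlled by Jensen's inequality, $\E S^{q-1}\le(\E S)^{q-1}$, settling the base case. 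For $q>2$ one applies the inductive hypothesis to the exponent $q-1$; the only real point is to absorb the term $\bigpar{\sum_i\E Y_i}\bigpar{\sum_i\E Y_i^{q-1}}$. By log-convexity of moments and Hölder's inequality on the sum, $\sum_i\E Y_i^{q-1}\le\bigpar{\sum_i\E Y_i}^{1/(q-1)}\bigpar{\sum_i\E Y_i^{q}}^{(q-2)/(q-1)}$, and since $\tfrac{1}{q-1}+\tfrac{q-2}{q-1}=1$, weighted AM--GM gives exactly $\bigpar{\sum_i\E Y_i}^{q/(q-1)}\bigpar{\sum_i\E Y_i^{q}}^{(q-2)/(q-1)}\le\bigpar{\sum_i\E Y_i}^{q}+\sum_i\E Y_i^{q}$, so the recursion closes with $C_q = 2^{q-2}(1+2C_{q-1})$.

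It then remains to assemble. Applying the claim with $Y_i = \tilde X_i^2$ and $q = p/2$, and using $\E\tilde X_i^2 = 2\,\E X_i^2$ (since $\E X_i = 0$) together with $\E|\tilde X_i|^p \le 2^p\,\E|X_i|^p$, one gets $\E\bigpar{\sum_i\tilde X_i^2}^{p/2}\le C\bigsqpar{\bigpar{\sum_i\E X_i^2}^{p/2}+\sum_i\E|X_i|^p}$ with $C$ depending only on $p$; since $\sum_i\E X_i^2 = \E\bigabs{\sum_i X_i}^2$, combining with the reduction of the first paragraph and the bound $a+b\le 2\max(a,b)$ gives the assertion (the indices with $\E|X_i|^p=\infty$ make the inequality vacuous, so all moments may be assumed finite). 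I expect the main obstacle to be the inductive step of the nonnegative claim: one must carry the $q$-dependent constant cleanly through the recursion and check that the cross term is absorbed with exactly the right exponents. Everything else is a routine application of classical inequalities --- symmetrization, Khintchine (or Burkholder--Davis--Gundy), Hölder, Jensen, and AM--GM.
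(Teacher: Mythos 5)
Your sketch is a correct proof of the stated inequality, but note that the paper does not prove this lemma at all --- it is quoted directly from Rosenthal's original work \cite{rosenthal}, so there is no internal argument to compare against. What you give is essentially the classical route: symmetrization plus conditional Khintchine (or, as you note, Burkholder--Davis--Gundy applied to the martingale of partial sums) reduces the problem to moments of $\sum_i \tilde X_i^2$, and the inequality $\E\bigpar{\sum_i Y_i}^{q}\le C_q\bigl[\bigpar{\sum_i\E Y_i}^{q}+\sum_i\E Y_i^{q}\bigr]$ for independent nonnegative $Y_i$ is then proved by the standard recursion $\E S^{q}\lesssim \sum_i\E Y_i^{q}+\bigpar{\sum_i\E Y_i}\E S^{q-1}$; your handling of the cross term via the interpolation bound $\E Y_i^{q-1}\le(\E Y_i)^{1/(q-1)}(\E Y_i^{q})^{(q-2)/(q-1)}$, Hölder over the sum, and weighted AM--GM closes the induction correctly, and the final bookkeeping ($\E\tilde X_i^2=2\E X_i^2$, $\E|\tilde X_i|^p\le 2^p\E|X_i|^p$, $\sum_i\E X_i^2=\E\bigabs{\sum_i X_i}^2$) is right. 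Two small caveats: you correctly read the left-hand side of \eqref{eq:rosenthal} as $\E\bigabs{\sum_i X_i}^p$ (the displayed statement drops the exponent), and your reduction needs $p\ge 2$ so that $q=p/2\ge 1$; you should say explicitly that for $1\le p\le 2$ the inequality is immediate from Jensen, $\E\bigabs{\sum_i X_i}^p\le\bigpar{\E\bigabs{\sum_i X_i}^2}^{p/2}$ (in the paper only $p=2+\al>2$ is used). With those remarks added, the argument is complete and self-contained, which is more than the paper attempts.
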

\begin{proof}[Proof of \refL{lem:HS-proj}]
By convexity the inequality $(z_1+z_2)^p \leq 2^{p-1}(z_1^p+z_2^p)$ holds for all $z_1,z_2 \in \R$ and $p \geq 1$.
More generally given real numbers $z_1,\cdots,z_{2^k}$ on has
\[
\lrpar{\sum_{i = 1}^{2^k} z_i}^p \leq 2^{k(p-1)}\sum_{i = 1}^{2^k} z_i^p.
\]
Let $k$ be the smallest power of two at least as large as $d$. Take $u_1,\cdots,u_d$ to be an orthonormal basis for $H^\perp$ and take $u_{d+1},\cdots,u_{2^k}$ to be copies of the zero vector. Then
\[
\norm{P_{H^\perp}X}^{2+\al}_2 = \lrpar{\sum_{i = 1}^d (u_i^\top X)^2}^{(1+\al/2)} = \lrpar{\sum_{i = 1}^{2^k} (u_i^\top X)^2}^{(1+\al/2)} \leq 
2^{k\al/2}\lrpar{\sum_{i = 1}^{2^k} \abs{u_i^\top X}^{2+\al}} \leq (2d)^{\al/2}\lrpar{\sum_{i = 1}^{d} \abs{u_i^\top X}^{2+\al}}.
\]
Since $u_i^\top X$ is a linear combination of independent mean 0 random variables with bounded $(2+\al)$ moments we may apply \refL{lem:rosenthal} to deduce
\[
\E \abs{u_i^\top X}^{2+\al} = \E \biggabs{\sum_{j = 1}^n u_{ij} X_j}^{2+\al} \leq C\max \lrpar{\sum_{j = 1}^n \E\abs{u_{ij}X_j}^{2+\al}, \lrpar{\E\biggabs{\sum_{j = 1}^n u_{ij}X_j}^2}^{1+\al/2}},
\]
where $C$ depends on $\al$.
The first term in the max can be bounded by 
\[
\sum_{j = 1}^n \E\abs{u_{ij}X_j}^{2+\al} = 
\sum_{j = 1}^n \abs{u_{ij}}^{2+\al}\E\abs{X_j}^{2+\al} 
\le R\sum_{j=1}^n |u_{ij}|^2
= R\]
with the the inequality following from the higher moment assumption and that fact that $u$ is a unit vector. The second term can be bounded by 
\[
\lrpar{\E\biggabs{\sum_{j = 1}^n u_{ij}X_j}^2}^{1+\al/2} = \lrpar{\sum_{j = 1}^n u_{ij}^2\E X_j^2}^{1+\al/2} = 1^{\beta/2} = 1,
\]
with the first two equalities following from the normalization assumption and that $u$ is a unit vector.
Therefore
\[
\E\norm{P_{H^\perp} X}_2^{2+\al} \leq (2d)^{\al/2} \sum_{i = 1}^d \E\abs{u_i^\top X}^{2+\al} \leq (2^{\al/2}CR)\cdot d^{1+\al/2}.
\]
\end{proof}
To conclude this section we give a slight generalization of \refL{lem:HS-proj}.
\begin{corollary}
Let $X \in \R^n$ be a random vectors with independent entries that have identical second moment and finite $(2+\beta)$ moment. Define $\bar{X} := (X - \E X)$. Let $H$ be a subspace of co-dimension $d$. Then 
\[
\E\norm{P_{H^\perp}\bar{X}}_2^{2+\beta} \le C(\E\norm{P_{H^\perp}\bar{X}}_2^2)^{1+\beta/2}
\]
where $C > 0$ depends on $\beta$ and the $2+\beta$ moment. 
\end{corollary}
\begin{proof}
    Applying \refL{lem:HS-proj} to $\bar{X}/((\E\norm{\bar{X}}_2^2/n)^{1/2})$ gives
\begin{align*}
    \E\norm{P_{H^\perp}\bar{X}}_2^{2+\beta} 
    &= (\E\norm{\bar{X}}_2^2/n)^{1+\beta/2} \E \norm{P_{H^\perp}(\bar{X}/(\E\norm{\bar{X}}_2^2/n)^{1/2})}_2^{2+\beta}, \\
    &\le C(\E\norm{\bar{X}}_2^2)^{1+\beta/2}(d/n)^{1+\beta/2}, \\
    &= C(\E\norm{\bar{X}}_2^2)^{1+\beta/2}\lrpar{\frac{\E\norm{P_{H^\perp }\bar{X}}_2^2}{\E\norm{\bar{X}}_2^2}}^{1+\beta/2}, \\
    &= C(\E\norm{P_{H^\perp}\bar{X}}_2^2)^{1+\beta/2},
\end{align*}
where the second to last line follows from \refL{lem:2-moment} and homogeneity of norms.
\end{proof}
\section{Smallest singular value: tall case}\label{Sec:tall}
Having introduced the necessary technical results regarding uniform anti-concentration, moment estimates and discretization, we are ready to prove our estimates for the smallest singular value estimate for inhomogenous random rectangular matrices. We first consider the case where the matrix is tall.

\begin{theorem}\label{thm:sv-tall}
There exists positive constants $C,c,\lambda$ depending on $a$ and $b$ such that the following is true: Let $A \in \R^{N \times n}$ be a random matrix satisfying the Hilbert-Schmidt norm assumption with independent entries that are uniformly anti-concentrated and $N-n \geq \lambda n/\log(n) \geq 1$. Then 
\[
\bP\lrpar{\inf_{x \in \bS^{n-1}} \norm{Ax}_2 \leq \eps\sqrt{N}} \leq (C\eps)^{N-n+1} + e^{-cN}.
\]
\end{theorem}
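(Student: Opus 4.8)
The strategy is the standard compressible/incompressible split combined with a net argument for the incompressible part, following the tall-matrix approach of Litvak--Pajor--Rudelson--Tomczak-Jaegermann. First I would dispose of the compressible vectors: since $N \ge n$ and the entries of $A$ satisfy \refA{ass:UAC} and \refA{ass:HS-norm}, \refL{lem:comp} gives positive $\delta,\rho,c_0$ (depending only on $K,a,b$) with $\bP(\inf_{x \in Comp(\delta,\rho)}\norm{Ax}_2 \le c_0\sqrt{N}) \le 2e^{-c_0 N}$. As long as $\eps \le c_0$ (for larger $\eps$ the bound is trivial by absorbing into the constant $C$) this contributes only the $e^{-cN}$ term. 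So the work is all in bounding $\inf_{x \in Incomp(\delta,\rho)}\norm{Ax}_2$ from below.

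For the incompressible part I would build a lattice-like net using \refT{thm:net-basic}. Apply it with a parameter $\eps_0 \in (0,1/4)$ to be chosen: on an event of probability at least $1 - e^{-2n}$ there is a net $\Lambda(\eps_0) \subset \frac{3}{2}B_2^n \setminus \frac12 B_2^n$ of size $(C_1/\eps_0)^n$ such that every $x \in \bS^{n-1}$ has some $y \in \Lambda(\eps_0)$ with $\norm{x-y}_\infty \le \eps_0/\sqrt n$ and $\norm{A(x-y)}_2 \le \sqrt{2\E\norm{A}_{HS}^2/n} \le \sqrt{2KNn/n} = \sqrt{2KN}$. Then for any $x \in \bS^{n-1}$, $\norm{Ax}_2 \ge \norm{Ay}_2 - \norm{A(x-y)}_2 \ge \norm{Ay}_2 - \sqrt{2KN}$, so it suffices to show $\inf_{y \in \Lambda(\eps_0)}\norm{Ay}_2$ is comfortably larger than $\sqrt{2KN}$, say at least $(\sqrt{2K}+ t\sqrt{N-n+1}\,)\sqrt N$ for the appropriate threshold $t$. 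The key point that makes the tall case work is that for a \emph{fixed} $y$ with $\norm{y}_2 \ge 1/2$, the vector $Ay = \sum_i y_i A_i$ has independent coordinates (the columns $A_i$ are independent, and within a row the entries are independent), each coordinate being a linear combination $\sum_i y_i a_{ji}$ of independent entries satisfying \refA{ass:UAC}; by \refT{thm:rogozin} (applied with $u = y$, noting $\norm{y}_\infty \le \frac{3}{2\sqrt n}$ is small while $\norm{y}_2 \ge 1/2$) each coordinate of $Ay$ satisfies a small-ball bound $\sup_z \Pr(|(Ay)_j - z| < s) \le C_2 s$ for $s$ above a threshold. Feeding this into the tensorization lemma \refL{lem:tensorization} over the $N$ independent coordinates gives $\Pr(\norm{Ay}_2 \le s\sqrt N) \le (C_3 s)^N$ for $s$ not too small.

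Putting it together via a union bound over $\Lambda(\eps_0)$: for $s \ge \eps\sqrt{N/(N-n+1)}$ times a constant,
\[
\Pr\Bigl(\inf_{y \in \Lambda(\eps_0)}\norm{Ay}_2 \le s\sqrt N\Bigr) \le (C_1/\eps_0)^n \,(C_3 s)^N + e^{-2n}.
\]
Here I exploit that $N - n \ge \lambda N/\log N$, i.e. $n \le N(1 - \lambda/\log N)$, so $(C_1/\eps_0)^n (C_3 s)^N = \bigl((C_1/\eps_0)^{n/N} C_3 s\bigr)^N$; choosing $\eps_0$ a suitable small constant and then taking $s$ of the form $C_4\eps\sqrt{N/(N-n+1)}$ makes the geometric ratio at most $(C\eps)^{(N-n+1)/N \cdot N /?}$ — more precisely one checks $(C_1/\eps_0)^n(C_3 s)^N \le (C\eps)^{N-n+1} e^{-cn}$ after absorbing the $(C_1/\eps_0)^n$ factor using the aspect-ratio gap $n/N \le 1 - \lambda/\log N$ (this is exactly where the hypothesis $N-n \ge \lambda N/\log N$ is used, and it forces the threshold to be $\eps\sqrt N$ rather than $\eps(\sqrt{N+1}-\sqrt n)$ up to constants since $\sqrt{N+1}-\sqrt n \asymp (N-n+1)/\sqrt N$). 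Combining with the compressible bound and using $N \ge n$ to write $e^{-cn} + e^{-c_0 N} \le e^{-c'n}$ — and noting the theorem as stated allows $e^{-cN}$ which is even weaker — yields the claim.

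\textbf{Main obstacle.} The delicate point is the bookkeeping in the last display: one must verify that the entropy factor $(C_1/\eps_0)^n$ from the net is genuinely dominated by the decay $(C_3 s)^N$ with room to spare for the target exponent $N-n+1$, and this is precisely where $N - n \gtrsim N/\log N$ enters. A secondary technical nuisance is checking the threshold conditions: \refT{thm:rogozin} and \refL{lem:tensorization} only give small-ball bounds for $s$ (resp.\ the per-coordinate scale) above an absolute constant multiple of $\norm{y}_\infty \asymp 1/\sqrt n$, so one needs $\eps\sqrt N \gtrsim \sqrt{N/n}$, i.e. $\eps \gtrsim 1/\sqrt n$; for smaller $\eps$ the bound $(C\eps)^{N-n+1}$ is already smaller than, say, $e^{-cn}$ once $N-n+1 \gtrsim n/\log N$, so that regime is handled by the $e^{-cN}$ term (possibly after enlarging $N_0$). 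One should also double-check that the net guarantee uses $\norm{y}_2$ bounded below (which holds since $\Lambda(\eps_0) \subset \frac32 B_2^n \setminus \frac12 B_2^n$) so that \refT{thm:rogozin} applies with $\norm{y}_2 \ge 1/2$.
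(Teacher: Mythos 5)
Your high-level architecture is the same as the paper's (compressible part via \refL{lem:comp}, incompressible part via the lattice-like net of \refT{thm:net-basic} plus Rogozin plus tensorization plus a union bound absorbing $(C_1/\eps_0)^n$ against $(C_3 s)^N$ using the aspect ratio gap), but there is a genuine error at the step where you invoke \refT{thm:rogozin}. You apply it with $u=y$ for a net point $y \in \Lambda(\eps_0) \subset \tfrac{3}{2}B_2^n \setminus \tfrac12 B_2^n$, asserting $\norm{y}_\infty \le \tfrac{3}{2\sqrt n}$. That is false: \refT{thm:net-basic} guarantees only $\norm{x-y}_\infty \le \eps_0/\sqrt n$ and $\norm{y}_2 \le 3/2$; it gives no control on $\norm{y}_\infty$, which can be as large as roughly $3/2$ (for instance when the incompressible $x$ being approximated has one large coordinate, which incompressibility does not forbid). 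Since Rogozin's bound is only valid at scales $s \gtrsim \norm{u}_\infty$, your per-coordinate small-ball estimate would only kick in at scales $s$ bounded below by an absolute constant, and the subsequent tensorization and union bound would not go through at the scales $s \sim \eps\sqrt{N/(N-n+1)}$ you need.

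The missing idea is exactly what \refL{lem:incomp} is for, and it is how the paper's \refPp{prop:incomp-tall} proceeds: for $x \in Incomp(\delta,\rho)$ there is a set $\sigma \subset [n]$ with $|\sigma| \gtrsim \rho^2\delta n$ on which $|x_j| \asymp 1/\sqrt n$; choosing the net parameter $\eta$ small enough that this survives the perturbation to $y$, one gets $\norm{P_\sigma y}_\infty \lesssim 1/\sqrt{\delta n}$ and $\norm{P_\sigma y}_2 \gtrsim \rho^2\sqrt\delta$. One then applies \refT{thm:rogozin} to the \emph{projected} vector $P_\sigma y$, dropping the other coordinates by conditioning (i.e.\ using $\sup_z \Pr(|\ip{y}{A^i}-z|<t) \le \sup_z \Pr(|\ip{P_\sigma y}{P_\sigma A^i}-z|<t)$). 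This is what makes the threshold $t_0 \asymp 1/\sqrt n$ that you wanted actually correct. Secondarily, the paper also splits off the very tall regime $N-n \ge Mn$ via \refL{lem:very-tall} and restricts \refPp{prop:incomp-tall} to $N - n \le Mn$, which it uses to ensure $n$ is not too small when dealing with the below-threshold regime of $\eps$; your sketch does not address this, but it is a smaller bookkeeping point than the $\norm{y}_\infty$ error.
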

To prove \refT{thm:sv-tall} we will consider 2 subcases. The first is when $A$ is very tall. This is addressed by the following fact, which follows from Proposition 5.1 from \cite{GL}.
\begin{proposition}\label{prop:very-tall}\cite{GL}
    There exists positive constants $M,C,c$ depending on $a$ and $b$ such that the following is true: Let $A \in \R^{N \times n}$ be a random matrix satisfying the Hilbert-Schmidt norm assumption with independent entries that are uniformly anti-concentrated and $N \geq Mn$. Then 
    \[
    \Pr\lrpar{\inf_{x \in \bS^{n-1}}\norm{Ax}_2 \leq C\sqrt{N}} \leq e^{-cN}.
    \] 
\end{proposition}
The other case is when $\lambda n/\log n \leq N-n < Mn$. 
 For the incompressible vector case we will use a net argument. Our proof is analogous to the proof of smallest singular value estimates for tall subgaussian matrices from \cite{Litvak1}. 
\begin{proposition}\label{prop:incomp-tall}
    There exist positive constants $\lambda,C,c$, depending only on $a$ and $b$ so that the following is true: Suppose $1 \leq \lambda n/\log(n) \leq N-n \leq Mn$, where $M$ is from \refPp{prop:very-tall}. Let $A \in \R^{N \times n}$ be a random matrix satisfying the Hilbert-Schmidt norm assumption with independent entries that are uniformly anti-concentrated. Then
    \begin{equation}\label{eq:incomp-tall}
    \bP\lrpar{\inf_{x \in \incomp(\delta,\rho)}\norm{Ax}_2 \leq \eps\sqrt{N}} \leq (C\eps)^{N-n+1} + e^{-cN},
    \end{equation}
    for all $\eps > 0$.
\end{proposition}
\begin{proof}[Proof of \refT{prop:incomp-tall}]
     Let $\Lambda(\eps)$ be the net from \refT{thm:net-basic} having size at most $(C_1/\eps)^n$ such that, with failure probability at most $e^{-2n}$, every unit vector $x$ has an approximating vector $y \in \Lambda(\eps)$ satisfying 
    \[
    \norm{x-y}_\infty \leq \frac{\eps}{\sqrt{n}},~~\norm{A(x-y)}_2 \le \eps\sqrt{N},
    \]
    with the inequalities following from \eqref{eq:net-basic} and the Hilbert-Schmidt norm assumption.
    Let $\cN \subset \Lambda(\eps)$ consist of all vectors $y$ that approximates some vector in $\incomp(\delta,\rho)$. By \refPp{prop:incomp} there exists $\sigma \subset \{1,\cdots,n\}$ of size $\size{\sigma} \geq \frac{1}{2}\rho^2\delta n \gsim n$ such that 
    \[
    \frac{1}{\sqrt{n}}\lrpar{\frac{\rho}{\sqrt{2}} - \eps}\leq \abs{y_i} \leq \frac{1}{\sqrt{n}}\lrpar{\frac{1}{\sqrt{\delta}} + \eps},
    \]
    for all indices $i$ in $\sigma$.
    If $ \eps \leq \min \{\rho/\sqrt{8},1/\sqrt{\delta}\}$ then
    $\rho/\sqrt{8n} \leq \abs{y_i} \leq 2/\sqrt{\delta n}$
    for all $y \in \cN$. Letting $P_\sigma y \in \R^{\size{\sigma}}$ denote the projection of $y$ onto the coordinate subspace indexed by $\sigma$, we have that 
    \[
    \norm{P_\sigma y}_\infty \leq \frac{2}{\sqrt{\delta n}} \lsim \frac{1}{\sqrt{n}},~~\norm{P_\sigma y}_2 \geq \frac{\rho^2\sqrt{\delta}}{4} \gsim 1.
    \]
    Therefore by \refT{thm:rogozin} we have 
    \[
    \sup_{z \in \R}\bP\lrpar{\abs{\ip{y}{A^\top e_i} - z} < \eps} \leq \sup_{z \in \R}\Pr\lrpar{\abs{\ip{P_\sigma y}{P_\sigma A^\top e_i} - z} < \eps} \lsim \eps,
    \]
    for all $i \in \{1,2,\cdots,N\}$ whenever $\eps \gg n^{-1/2}$. 
    \refL{lem:tensorization} then implies that
    \[
    \Pr\lrpar{\norm{Ay}_2 < 2\eps\sqrt{N}} \leq (C_2\eps)^{N}.
    \]
    Taking a union bound over $\cN$ we get
    \[
    \Pr\lrpar{\inf_{y \in \cN}\norm{Ay}_2 < 2\eps\sqrt{N}} \leq \lrpar{\frac{C_1}{\eps}}^n\lrpar{C_2\eps}^{N} \leq (C\eps)^{N-n}.
    \]
    Observe now that whenever $\inf_{y \in \cN}\norm{Ay}_2 \ge 2\eps\sqrt{N}$ and the distance preserving properties of $\Lambda(\eps)$ hold one has
    $\inf_{x \in \incomp(\delta,\rho)} \norm{Ax}_2 \ge 2\eps\sqrt{N} - \eps\sqrt{N} \ge \eps\sqrt{N}$. Therefore for $n^{-1/2} \ll \eps \le \min\{\rho/\sqrt{8},1/\sqrt{\delta}\}$ we have 
    \[
    \bP\lrpar{\inf_{x \in \incomp(\delta,\rho)}\norm{Ax}_2 < \eps\sqrt{N}} \le (C\eps)^{N-n} + e^{-cn}.
    \]
    On the other hand when $\eps \lsim n^{-1/2}$ we have, since $\lambda n/\log(n) \ge 1$, that $(C\eps)^{N-n} \le e^{-cn}$, with $\lambda$ as small as necessary. Therefore
    \[
    \bP\lrpar{\inf_{x \in \incomp(\delta,\rho)}\norm{Ax}_2 \le \eps\sqrt{N}} \le (C\eps)^{N-n} + e^{-cn}.
    \]
    For all $\eps > 0$, where $C \ge (\min\{\rho/\sqrt{8},1/\sqrt{\delta}\})^{-1}$. To conclude \eqref{eq:incomp-tall} note that we may replace $e^{-cn}$ with $e^{-cN}$ since $n \ge N/(M+1)$ and $M$ is an absolute constant. In addition we may replace $(C\eps)^{N-n}$ with $(C\eps)^{N-n+1}$ since, for $\eps \gsim n^{-1/2}$, one can scale $C$ by some absolute constant $K$ so that $K^{N-n+1} \ge 1/(C\eps)$, hence $(C\eps)^{N-n} \le (CK\eps)^{N-n+1}$.
    
\end{proof}
Therefore \refT{thm:sv-tall} is a corollary of \refPp{prop:very-tall}, \refL{lem:comp} and \refPp{prop:incomp-tall}.
\section{Smallest singular value: almost square case}\label{Sec:almost square}
 Having addressed the tall case, we turn our attention to the almost square case, where $0 \le N-n \leq \lambda n/\log n$. 
\begin{theorem}\label{thm:sv-general1}
    Let $0 \le N-n \leq \lambda n/\log n$. Let $A \in \R^{N \times n}$ be a random matrix satisfying the Hilbert-Schmidt norm assumption with independent entries that satisfy $\E \xi = 0$, are uniformly anti-concentrated and have identical second moments along any given column. Then there exists positive constants $n_0,C,c$ depending only on $a$ and $b$ such that whenever $N \ge n \geq n_0$ and $\eps > 0$ one has
        \[
        \Pr\lrpar{\sigma_n(A) < \eps\lrpar{\sqrt{N+1} - \sqrt{n}}} \leq (C\eps \log(1/\eps))^{N-n+1} + e^{-cN},
        \]
    for all $\eps > 0$.
\end{theorem}

\begin{theorem}\label{thm:sv-general2}
    Let $0 \le N-n \leq \lambda n/\log n$. Let $A \in \R^{N \times n}$ be a random matrix satisfying the Hilbert-Schmidt norm assumption with independent entries $\xi$ that satisfy $\E \xi = 0, \E \xi^2 \gsim 1$, have identical second moments along any given column with $\xi/(\E|\xi|^2)^{1/2}$ satisfying the higher moment assumption. Then there exists positive constants $n_0,C,c$ depending only on $R$ and $\beta$ such that whenever $N \ge n \geq n_0$ and $\eps > 0$ one has
        \[
        \Pr\lrpar{\sigma_n(A) < \eps\lrpar{\sqrt{N+1} - \sqrt{n}}} \leq (C\eps))^{N-n+1} + e^{-cN},
        \]
    for all $\eps > 0$.
\end{theorem}
In conjunction with \refT{thm:sv-tall}, \refT{thm:sv-general1} implies \refT{thm:sv-1} and \refT{thm:sv-general2} implies \refT{thm:sv-2}.
 Here we will present the proof of \refT{thm:sv-general2}. Unlike \refT{thm:sv-general2}, \refT{thm:sv-general1} can be proved in a similar way as Theorem 2 from \cite{GL}, but now using \refT{thm:distance} in lieu of the distance theorem given in \cite{GL} and a slightly modified version of the invertibility-via-distance lemma from \cite{RV}. Moreover our proof of \refT{thm:sv-general2} can be modified in a straightforward way to give a proof of \refT{thm:sv-general1}. Turning our attention to \refT{thm:sv-general2} we note that it suffices to prove a lower bound on $\inf_{x \in \incomp(\delta,\rho)} \norm{Ax}_2$ as the compressible case follows from \refL{lem:comp}. We will follow the invertibility via distance approach implemented in \cite{GL} and \cite{RV}. To that end we begin by reducing the problem of lower bounding $\norm{Ax}_2$ over incompressible vectors to lower bounding the distance between a random vector and a subspace. Let $A_J$ denote the submatrix of $A$ whose columns are indexed by $J$ and let $H_{J^c}$ denote the subspace spanned by the columns of $A$ indexed by $J^c$.
 \begin{definition}\label{def:spread}\cite{RV}
     A vector $v \in \bS^{d-1}$ is said to be spread if all entries satisfy $|v_i| \in [c/\sqrt{d},C/\sqrt{d}]$, for some positive constants $c,C$. We denote the set of spread vectors as $\spread_d$.
 \end{definition}
\begin{lemma}\label{lem:invert-via-dist}
    There exists a positive constant $c$ such that the following is true: Fix $S \subset \{1,2,\cdots,n\}$ with $|S| \le cn$.
    Let $d \in \{1,2\cdots,n -|S|\}$ and $A \in \R^{N \times n}$ be a random matrix. Then there exists $J \subset \{1,2,\cdots,n\} \setminus S$ of size $d$ such that
    \[
    \Pr\lrpar{\inf_{x \in \incomp(\delta,\rho)}\norm{Ax}_2 \leq \frac{\eps d}{\sqrt{n}}} \leq
    C^d\Pr\lrpar{\inf_{x \in \spread_d} \dist(A_Jx,H_{J^c}) \leq \eps\sqrt{d}},
    \]
    where $C$ and the spread parameters in \refD{def:spread} depend only on $\delta$ and $\rho$.
\end{lemma}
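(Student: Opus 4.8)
Following \cite{RV}, the plan is to assemble the reduction from three ingredients: a coordinate split that replaces $\norm{Ax}_2$ by a distance from a random vector to a random subspace; the spreadness of incompressible vectors on a linear-sized set of coordinates (\refPp{lem:incomp}); and a combinatorial averaging step that promotes ``for each bad $x$ some $d$-element index set works'' into ``one fixed $d$-element index set works for the whole bad event.''

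\textbf{Coordinate split.} Fix $J\subset\{1,\dots,n\}$ with $\size{J}=d$, and for $x\in\bS^{n-1}$ split $x=x_J+x_{J^c}$ according to the coordinates in $J$ and in $J^c$. Writing $A_J$ for the $N\times d$ submatrix of $A$ with columns indexed by $J$ (so that $x_J\in\R^J$), we have $Ax=A_Jx_J+A_{J^c}x_{J^c}$, and since $A_{J^c}x_{J^c}\in H_{J^c}$ we get $\norm{Ax}_2\ge\dist(A_Jx_J,H_{J^c})=\norm{x_J}_2\,\dist(A_J\hat x_J,H_{J^c})$ with $\hat x_J:=x_J/\norm{x_J}_2\in\bS^{d-1}$. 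If $x\in Incomp(\delta,\rho)$, then \refPp{lem:incomp} supplies a coordinate set $S(x)$, which I fix of size exactly $\lceil\tfrac12\rho^2\delta n\rceil$, with $\rho/\sqrt{2n}\le\size{x_j}\le 1/\sqrt{\delta n}$ for all $j\in S(x)$. Choosing $J\subseteq S(x)$ then forces $\norm{x_J}_2\ge\tfrac{\rho}{\sqrt2}\sqrt{d/n}$ and makes every entry of $\hat x_J$ have absolute value in $\bigl[\tfrac{\rho\sqrt\delta}{\sqrt2}\cdot\tfrac1{\sqrt d},\,\tfrac{\sqrt2}{\rho\sqrt\delta}\cdot\tfrac1{\sqrt d}\bigr]$, i.e.\ $\hat x_J\in\text{spread}_d$ (\refD{def:spread}) with parameters depending only on $\delta,\rho$. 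Hence $\norm{Ax}_2\le\eps d/\sqrt n$ implies $\dist(A_J\hat x_J,H_{J^c})\le C_0\eps\sqrt d$ with $C_0=\sqrt2/\rho$. So, letting $\mathcal G_J$ be the event that some $x\in Incomp(\delta,\rho)$ with $\norm{Ax}_2\le\eps d/\sqrt n$ has $J\subseteq S(x)$, we obtain the deterministic inclusion $\mathcal G_J\subseteq\mathcal F_J:=\bigl\{\inf_{v\in\text{spread}_d}\dist(A_Jv,H_{J^c})\le C_0\eps\sqrt d\bigr\}$.

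\textbf{Averaging over $J$.} Put $\mathcal E=\{\inf_{x\in Incomp(\delta,\rho)}\norm{Ax}_2\le\eps d/\sqrt n\}$. At every point of $\mathcal E$, any witness $x$ has exactly $\binom{\lceil\rho^2\delta n/2\rceil}{d}$ $d$-subsets contained in $S(x)$, and each such subset $J$ puts that point in $\mathcal G_J$; this yields the pointwise bound $\sum_{\size{J}=d}\mathbbm{1}_{\mathcal G_J}\ge\binom{\lceil\rho^2\delta n/2\rceil}{d}\,\mathbbm{1}_{\mathcal E}$. Taking expectations, using $\mathcal G_J\subseteq\mathcal F_J$, and noting the sum runs over $\binom{n}{d}$ sets, some $J^\ast$ satisfies $\Pr(\mathcal F_{J^\ast})\ge\binom{\lceil\rho^2\delta n/2\rceil}{d}\binom{n}{d}^{-1}\Pr(\mathcal E)$. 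In the regime $d\le\tfrac14\rho^2\delta n$ relevant to the applications (where $d=N-n+1$ is far below $n$), the ratio $\binom{n}{d}\,/\,\binom{\lceil\rho^2\delta n/2\rceil}{d}$ is at most $(4/(\rho^2\delta))^d$, so $\Pr(\mathcal E)\le C^d\Pr(\mathcal F_{J^\ast})$ with $C=4/(\rho^2\delta)$. Absorbing $C_0$ into $C$ and into the spread parameters (harmless, as the right-hand probability will be estimated only up to constants) and taking $J=J^\ast$ gives the claim.

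\textbf{Main obstacle.} There is no conceptually hard step; the work is entirely bookkeeping — verifying that the coordinate split costs only a factor of order $\sqrt{d/n}$, that the induced spread parameters and the constant $C_0$ are controlled by $\delta,\rho$ alone, and that $\binom{n}{d}\,/\,\binom{\lceil\rho^2\delta n/2\rceil}{d}\le C^d$ in the relevant range of $d$. No measurable selection of the witness $x$ is required, since $\sum_J\mathbbm{1}_{\mathcal G_J}\ge\binom{\lceil\rho^2\delta n/2\rceil}{d}\mathbbm{1}_{\mathcal E}$ is a pointwise inequality between indicators of manifestly measurable events.
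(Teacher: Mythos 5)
The paper does not give its own proof of this lemma---it simply cites \cite{RV}---so there is nothing in the text to compare against line by line. Your reconstruction is the standard [RV] argument and is essentially correct: the coordinate split $Ax = A_Jx_J + A_{J^c}x_{J^c}$ with $A_{J^c}x_{J^c} \in H_{J^c}$ gives $\norm{Ax}_2 \geq \norm{x_J}_2\, \dist(A_J\hat{x}_J, H_{J^c})$, Proposition~\ref{lem:incomp} supplies the spread index set, your computed spread parameters $\bigl[\rho\sqrt{\delta}/\sqrt{2}, \sqrt{2}/(\rho\sqrt{\delta})\bigr]$ are correct, and the pointwise indicator inequality $\sum_J \mathbbm{1}_{\mathcal{G}_J} \geq \binom{\lceil\rho^2\delta n/2\rceil}{d}\mathbbm{1}_{\mathcal{E}}$ cleanly sidesteps measurable selection. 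Two small remarks worth making explicit. First, your argument requires $d$ to be at most a $\delta,\rho$-dependent fraction of $n$ so that the binomial ratio is bounded by $C^d$ and so that a $d$-subset of $S(x)$ even exists; the lemma as stated allows all $d \leq n$, which is formally too generous, but this is harmless since every application in the paper has $d = N-n+1 \lesssim N/\log N \ll n$. Second, and more substantively, your derivation produces the threshold $C_0\eps\sqrt{d}$ with $C_0 = \sqrt{2}/\rho$ on the distance side, not $\eps\sqrt{d}$ as the lemma is written. Your remedy of ``absorbing $C_0$ into $C$'' does not literally work, since $C$ multiplies the probability and not $\eps$; the honest statement is that the lemma's two $\eps$'s should differ by a $\delta,\rho$-dependent factor (as in the original [RV] formulation, which carries a $\rho/\sqrt{2}$ on the left-hand threshold). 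In practice this is invisible because the right-hand probability is subsequently bounded by $(C_2\eps)^{2d-1}$ via the distance theorem, so the extra constant is swallowed there --- but it is a genuine imprecision in the paper's restatement rather than something you can ``absorb into $C$.''
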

The proof of \refL{lem:invert-via-dist} is essentially the same as the proof of the corresponding lemma from \cite{RV} but for completeness we include a proof. 
\begin{proof}
    By \refPp{prop:incomp}  every $x \in \incomp(\delta,\rho)$ has at least $\delta^2\rho n/2$ entries with magnitude in the range $[\rho/\sqrt{2n},1\sqrt{\delta n}]$. Suppose $S$ has at most $c_1n$ elements where  $c_1 = \delta^2\rho/4$.  Then at least $\delta^2\rho n/4$ entries not in $S$ have said magnitudes. Next, let $J$ be distributed uniformly on $\{1,2,\cdots,n\} \setminus S$ and let $\cE(x)$ denote the event that all the entries of $x$ indexed by $J$ have said magnitudes. Then $\bP_J(\cE(x)) \ge \binom{\delta^2\rho n/4}{d}/\binom{n - |S|}{d} \ge c^d$. We now define $D(A,J) = \inf_{x \in \spread_d}\dist(A_Jx_J,H_{J^c})$ and define $\cF := \{A \in \R^{N \times n}:\bP_J((D,A) \ge \eps\sqrt{d}) > 1 - c^d)\}$. Suppose $A \in \cF$.
    Since $\bP_J(\cE(x)) + \bP_J((D,A) \ge \eps\sqrt{d}) > 1$ there exists a realization of $J$ for which $\cE(x)$ holds and $(D,A) \ge \eps\sqrt{d}$. Since the lower bound on $\cE(x)$ holds for all $x \in \incomp(\delta,\rho)$ and $\bP_J((D,A) \ge \eps\sqrt{d})$ is independent of $x$ it follows that for every $x \in \incomp(\delta,\rho)$ there exists some choice of $J$ realizing both events. Since $x_J\sqrt{n/d} \in \spread_d$ it follows that $\norm{Ax}_2 \ge \sqrt{d/n}D(A,J) \ge \eps d/\sqrt{n}$. Thus it suffices to bound $\bP_A(A \not \in \cF)$. Note that
    \begin{align*}
    \bP_A(A \not \in \cF) 
    &= \bP_A\bP_J((D,A) \ge \eps\sqrt{d}) \le 1 - c^d) \\
    &= \bP_A\bP_J((D,A) < \eps\sqrt{d}) > c^d) \\
    &\le c^{-d}\E_A\bP_J((D,A) < \eps\sqrt{d}) \\
    &= c^{-d}\bP_J\E_A((D,A) < \eps\sqrt{d}).
    \end{align*}
For the inequality we used Markov's inequality and in the last equality we used Fubini.
Since $\bP_A(A \not \in \cF) \le c^{-d}\bP_J\E_A((D,A) < \eps\sqrt{d})$ by the Probabilistic Method there exists a choice of $J$ such that $\bP_A(A \not \in \cF) \le c^{-d}\E_A((D,A) < \eps\sqrt{d})$. We conclude as desired. 
\end{proof}
For the remainder of the section we will take $d := N-n+1$, $J$ the subset guaranteed by \refL{lem:invert-via-dist} with $S \subset \{1,2,\cdots\}$ consisting of indices for the $|S| \gsim n$ columns of $A$ with largest second moments. Since $A$ satisfies the Hilbert-Schmidt norm assumption it follows that $\E\norm{A_Je_i}_2^2 \lsim N$ for all $i \in \{1,2,\cdots,d\}$. Consequently $A_J$ also satisfies the Hilbert-Schmidt norm assumption. Furthermore by perturbing the entries of $A$ by independent mean 0 gaussian with arbitrarily small non-zero variance, we may assume that the columns of $A$ are linearly independent almost surely. In particular we may assume that $\dim(H_{J^c}^\perp) = N - (n-d) = 2d-1$. We write $W := P_{H^{\perp}_{J^c}} A_J$ to denote the orthogonal projection of the columns of $A_J$ onto the orthogonal complement of the subspace spanned by the columns of $A_{J^c}$. From \refL{lem:2-moment} and \refT{lem:HS-proj} we conclude the following estimates on $\E\norm{W}_{HS}^2$ and $\E\norm{W}_{HS}^{2+\al}$ respectively :
\begin{proposition}\label{prop:HS-of-W}
\[
    \E\norm{We_i}_{2}^2 = \frac{2d-1}{N}\E\norm{A_Je_i}_2^2
\]
for all $i \in \{1,\cdots,d\}$. In particular 
\[
    \E\norm{W}_{HS}^2 \lsim d^2.
\]
\end{proposition}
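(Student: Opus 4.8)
Proposition~\ref{prop:HS-of-W} is a column‑by‑column application of \refL{lem:2-moment}; the only wrinkle is that the subspace $H_{J^c}$ is random, so one must condition to put oneself in the deterministic‑subspace situation that \refL{lem:2-moment} handles. The plan is therefore: (i) condition on $A_{J^c}$, (ii) apply \refL{lem:2-moment} conditionally to each column of $W$, (iii) integrate and sum, (iv) invoke \refA{ass:HS-norm} for $A_J$. To set up, note that the $i$‑th column of $W = P_{H_{J^c}^\perp} A_J$ is $We_i = P_{H_{J^c}^\perp}(A_J e_i)$, i.e.\ the orthogonal projection onto $H_{J^c}^\perp$ of the $i$‑th column of $A_J$. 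Since $A$ has independent columns, $A_J$ is independent of $A_{J^c}$, and $H_{J^c}$ is a function of $A_{J^c}$ alone, so I would condition on $A_{J^c}$. By the Gaussian perturbation performed just before the proposition, the $n-d$ columns of $A_{J^c}$ are almost surely linearly independent, so $\dim H_{J^c} = n-d$ and $H_{J^c}$ has co‑dimension $N-(n-d) = 2d-1$ in $\R^N$ (recall $d = N-n+1$). Moreover, conditionally on $A_{J^c}$ the vector $A_J e_i$ has its law unchanged — it is a column of $A$, hence satisfies \refA{ass:iso} — precisely because it is independent of the conditioning.

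\textbf{Main computation.} Now apply \refL{lem:2-moment} with ambient dimension $N$, the (conditionally) deterministic co‑dimension‑$(2d-1)$ subspace $H_{J^c}$, and the random vector $A_J e_i$, to get
\[
\E\bigl[\norm{We_i}_2^2 \mid A_{J^c}\bigr] = \frac{2d-1}{N}\,\E\bigl[\norm{A_Je_i}_2^2 \mid A_{J^c}\bigr] = \frac{2d-1}{N}\,\E\norm{A_Je_i}_2^2 ,
\]
the last equality again using independence of $A_J e_i$ from $A_{J^c}$. The right‑hand side is deterministic, so taking expectation over $A_{J^c}$ gives the stated identity $\E\norm{We_i}_2^2 = \frac{2d-1}{N}\E\norm{Ae_i}_{HS}^2$ (here $\norm{Ae_i}_{HS}$ is simply the $\ell_2$‑norm $\norm{A_Je_i}_2$ of the corresponding column of $A$, since Hilbert--Schmidt and $\ell_2$ norms of a vector agree). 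Summing over $i = 1,\dots,d$ and using $\norm{W}_{HS}^2 = \sum_{i=1}^d \norm{We_i}_2^2$ and $\norm{A_J}_{HS}^2 = \sum_{i=1}^d \norm{A_Je_i}_2^2$ yields $\E\norm{W}_{HS}^2 = \frac{2d-1}{N}\,\E\norm{A_J}_{HS}^2$. Finally, $A_J \in \R^{N\times d}$ and, by the submatrix hypothesis of \refT{thm:sv-general2}, $A_J$ satisfies \refA{ass:HS-norm}, so $\E\norm{A_J}_{HS}^2 \le KNd$; hence $\E\norm{W}_{HS}^2 \le \frac{2d-1}{N}\cdot KNd = Kd(2d-1)$.

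\textbf{Main obstacle and a companion remark.} There is no deep obstacle here: the only point requiring a little care is legitimizing the use of \refL{lem:2-moment}, whose hypothesis demands a \emph{deterministic} subspace, in the presence of the random $H_{J^c}$ — this is exactly what the conditioning on $A_{J^c}$ accomplishes, using the almost‑sure dimension count $\dim H_{J^c}^\perp = 2d-1$ supplied by the perturbation and the fact that $A_J e_i$ remains scaled‑isotropic after conditioning. The $(2+\al)$‑moment estimate announced in the same sentence of the text follows by the identical conditioning argument, with \refL{lem:HS-proj} substituted for \refL{lem:2-moment}.
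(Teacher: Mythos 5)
Your proof is correct and follows exactly the route the paper intends: the paper does not give an explicit proof of \refPp{prop:HS-of-W} but states it as an immediate consequence of \refL{lem:2-moment} and \refA{ass:HS-norm} after noting the Gaussian perturbation and the resulting dimension count $\dim(H_{J^c}^\perp)=2d-1$. You spell out the one implicit step — conditioning on $A_{J^c}$ to make $H_{J^c}$ deterministic so that \refL{lem:2-moment} applies, using independence of the columns of $A$ to leave the law of $A_Je_i$ untouched — and the rest (integrating out $A_{J^c}$, summing over $i$, and invoking \refA{ass:HS-norm} for $A_J\in\R^{N\times d}$) is exactly the bookkeeping the paper has in mind.
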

\begin{proposition}\label{prop:2+eps-of-W}
\[
    \E\norm{We_i}_{2}^{2+\al} \leq  \lrpar{C\norm{We_i}_2^2}^{1+\al/2},
\]
for all $i \in \{1,\cdots,d\}$, where $C$ depends only on $\al$ and $R$.
\end{proposition}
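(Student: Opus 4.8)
The plan is to obtain the bound as a conditional application of the reverse-Jensen inequality \refL{lem:HS-proj}, which controls $(2+\al)$-moments of the projection of a single scaled-isotropic vector. Fix $i$ and set $Z:=A_Je_i$, the $i$-th column of $A_J$. As a column of $A$, the vector $Z\in\R^N$ has independent entries satisfying \refA{ass:2+eps} and obeys $\E(ZZ^\top)=c_iI_N$ for some $c_i>0$, so it satisfies \refA{ass:iso}; crucially $Z$ is independent of the submatrix $A_{J^c}$, hence of the subspace $H_{J^c}$ and of the projection $P_{H_{J^c}^\perp}$. Recall also that it has already been arranged that the columns of $A$ are linearly independent almost surely, so that $H_{J^c}$ has co-dimension $N-(n-d)=2d-1$, and that $We_i=P_{H_{J^c}^\perp}Z$.

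First I would condition on $A_{J^c}$. Then $H_{J^c}$ is a deterministic subspace of co-dimension $2d-1$ while $Z$ keeps its law, so \refL{lem:HS-proj} applies to $Z$ with this subspace — its generic co-dimension parameter playing the role of $2d-1$, not to be confused with our fixed $d=N-n+1$ — yielding
\[
\E\bigl[\,\norm{We_i}_2^{2+\al}\,\bigm|\,A_{J^c}\,\bigr]\;\le\;C\bigl(\E\bigl[\,\norm{We_i}_2^{2}\,\bigm|\,A_{J^c}\,\bigr]\bigr)^{1+\al/2},
\]
where $C$ depends only on $\al$ and $R$: the upper moment bound in \refA{ass:2+eps} is scale-invariant, so the constant is uniform over $i$ (and over the scalings $c_i$).

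It then remains to remove the conditioning, and this is the only point that needs care. Taking expectations via the tower property would leave $\E\bigl[(\E[\norm{We_i}_2^2\mid A_{J^c}])^{1+\al/2}\bigr]$ on the right, and pulling the outer expectation inside by Jensen's inequality goes the wrong way. The remedy is that the conditional second moment is actually a deterministic constant: by \refL{lem:2-moment}, applied conditionally on $A_{J^c}$ (with $Z$ scaled-isotropic and $H_{J^c}$ of co-dimension $2d-1$), one has $\E[\norm{We_i}_2^2\mid A_{J^c}]=\tfrac{2d-1}{N}\E\norm{Z}_2^2$, which is independent of $A_{J^c}$ and hence equal to $\E\norm{We_i}_2^2$. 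Taking total expectations in the displayed inequality (and the tower property on the left) then gives $\E\norm{We_i}_2^{2+\al}\le C(\E\norm{We_i}_2^2)^{1+\al/2}$. I do not expect a real obstacle here: the argument is just this conditioning step gluing \refL{lem:HS-proj} to \refL{lem:2-moment}, with the only things to watch being the independence of $Z$ from $A_{J^c}$, the dimension bookkeeping $\dim H_{J^c}^\perp=2d-1$, and the Jensen subtlety just described.
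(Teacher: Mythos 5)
Your proof is correct and follows essentially the same route the paper intends: combine \refL{lem:HS-proj} with \refL{lem:2-moment}, applied to the column $Z=A_Je_i$ and the random subspace $H_{J^c}$ of co-dimension $2d-1$. The paper states this as an immediate consequence of those two lemmas without spelling out the conditioning on $A_{J^c}$ or the observation that the conditional second moment is deterministic (so Jensen's inequality causes no loss when passing from the conditional bound to the unconditional one); you make both points explicit, which is exactly the right bookkeeping.
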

\par 
\refL{lem:invert-via-dist} reduces the problem of proving \refT{thm:sv-general2} to lower bounding $\inf_{x \in \spread_d}\norm{Wx}_2$. To prove that $\inf_{x \in \spread_d} \norm{Wx}_2$ is large we take an appropriate net $\cN$ over $\spread_d$, as guaranteed by \refT{thm:net-full}, apply \eqref{eq:distance} to all net points and then take a union bound over $\cN$ to deduce that $\inf_{x \in \cN}\norm{Wx}_2$ is large. The distance preserving properties of the net will then imply that $\inf_{x \in \spread_d}\norm{Wx}_2$ is large. 
\subsection{Improved upper tail estimate on $B_\kappa(W)$}

In order to achieve the main term of $(C\eps)^{N-n+1}$ in \refT{thm:sv-general2} as opposed to $(C\eps\log(1/\eps))^{N-n+1}$ in \refT{thm:sv-general1} we will need to prove a sufficiently strong upper tail estimate on the regularized Hilbert-Schmidt norm \eqref{eq:regularized-HS} of $W$, $B_\kappa(W)$, in terms of $\norm{W}_{HS}^2$. In \cite{GL} the following bound was given: 
\[
\Pr\lrpar{B_\kappa(W) \geq C\E\norm{W}^2_{HS}} \leq (c\kappa)^{-2d}.
\]
Here the only assumption on $W$ is that the columns are independent. In the case where the 2-norms of the columns of $W$ satisfy a reverse Jensen's inequality the bound can be further strengthened.

\begin{lemma} \label{lem:hs2pluseps}
    Let $W$ be a random matrix with $d$ columns such that $\E \|W e_i\|_2^2$ exists and moreover for $\al, C > 0$ we have $\E \|W e_i\|_2^{2+\al} \le \lrpar{C \E \|W e_i\|_2^2}^{1+\al/2}$. Then there exists an absolute constant $c$ such that
    \[\Pr\lrpar{\cB_\kappa(W) \ge C\E \|W\|_{HS}^2} \le (c\kappa)^{-(2+\al) d}.\]
\end{lemma}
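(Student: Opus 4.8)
The plan is to exhibit, for every realization of $W$ outside an event of probability at most $(c\kappa)^{-(2+\al)d}$, an explicit vector $\alpha$ feasible in \eqref{eq:regularized-HS} that witnesses $\cB_\kappa(W) \le C'\,\E\norm{W}_{HS}^2$, where $C'$ is an absolute multiple of the hypothesis constant $C$. Write $v_i := \norm{We_i}_2^2$ and $b_i := \E v_i$, so that $\E\norm{W}_{HS}^2 = \sum_{i=1}^d b_i =: S$; columns with $b_i = 0$ satisfy $v_i = 0$ almost surely and affect neither side, so we may assume $b_i > 0$ for all $i$. Set $p := 1 + \al/2$, so the hypothesis reads $\E v_i^p \le (Cb_i)^p$ for each $i$.

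First I would fix a threshold $t$ equal to a suitable absolute multiple of $C$ (say $t = 2C$) and take $\alpha_i := \min\{1,\,(tb_i/v_i)^{1/2}\} \in [0,1]$, with $\alpha_i = 1$ when $v_i = 0$. The point of this choice is that it makes the objective small \emph{deterministically}: since $\alpha_i^2 v_i = \min(v_i, tb_i) \le tb_i$, one has $\sum_{i} \alpha_i^2\norm{We_i}_2^2 \le tS$ for every realization. Hence all the randomness is confined to whether $\alpha$ is feasible, i.e.\ whether $\prod_i \alpha_i \ge \kappa^{-d}$. Since $\prod_i \alpha_i = \prod_{i:v_i > tb_i}(tb_i/v_i)^{1/2}$, this is precisely the event $\{Y \le \kappa^{2d}\}$ with $Y := \prod_{i=1}^d \max\{1,\,v_i/(tb_i)\} \ge 1$. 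So $\{\cB_\kappa(W) > tS\} \subseteq \{Y > \kappa^{2d}\}$, and the task reduces to showing $\Pr(Y > \kappa^{2d}) \le (c\kappa)^{-(2+\al)d}$.

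Next I would estimate $\Pr(Y > \kappa^{2d})$ by an exponential Markov inequality at the exponent $p = 1+\al/2$: $\Pr(Y > \kappa^{2d}) \le \kappa^{-2pd}\,\E Y^p$. Because the columns of $W$ are independent, $\E Y^p = \prod_{i=1}^d \E\lrpar{\max\{1,\,v_i/(tb_i)\}}^p$, and splitting each factor on $\{v_i \le tb_i\}$ (where the integrand is $1$) and its complement (where the moment hypothesis gives $\E[v_i^p\,\indic{v_i > tb_i}] \le (Cb_i)^p$), each factor is at most $1 + (C/t)^p \le 2$. Thus $\E Y^p \le 2^d$, and since $2p = 2+\al$ we obtain $\Pr(Y > \kappa^{2d}) \le 2^d\kappa^{-(2+\al)d} \le (c\kappa)^{-(2+\al)d}$ for any absolute constant $c \le 2^{-1/2}$ (e.g.\ $c = 1/2$). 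Together with the previous step this proves the lemma, with the constant in front of $\E\norm{W}_{HS}^2$ equal to $t = 2C$; the cosmetic passage from $>$ to $\ge$ costs at most a further absolute factor.

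I do not expect a genuine obstacle, but two choices are load-bearing and worth flagging. The first is that the Markov exponent \emph{must} be taken to be exactly $p = 1+\al/2$ --- the largest moment the hypothesis controls --- because the identity $2p = 2+\al$ is exactly what converts into the exponent $(2+\al)d$ in the conclusion; the same scheme run with a crude exponent only recovers the weaker $(c\kappa)^{-2d}$ bound of \cite{GL}. The second is that the argument genuinely relies on independence of the columns of $W$, so that $\E Y^p$ factorizes over $i$; without it, a Markov bound on $\log Y$ would give only $O(1/\log\kappa)$-type decay. This costs nothing in the application: the matrix $W = P_{H^{\perp}_{J^c}}A_J$ to which \refL{lem:hs2pluseps} is applied has independent columns once one conditions on $A_{J^c}$. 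Degenerate columns and the exact admissible value of $c$ are routine.
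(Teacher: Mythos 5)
Your argument is correct and reaches the lemma's bound, but by a genuinely different and more economical route than the paper. The paper's proof first establishes an exact optimality characterization of $\cB_\kappa$ via a Lagrangian/KKT analysis (Proposition~\ref{prop:hsopt}), identifies the random support set $\mathbf S$ of the minimizing weights, majorizes $\cB_\kappa(W)$ by $Y(\mathbf S)=\frac{d}{\kappa^{2d/\#\mathbf S}}\bigl(\prod_{i\in\mathbf S}\|We_i\|_2^2\bigr)^{1/\#\mathbf S}$, and then union-bounds over all $2^d$ candidate supports, applying Markov at exponent $(1+\al/2)\#S$ to each term. You instead pick one explicit feasible weight vector $\alpha_i=\min\{1,(tb_i/v_i)^{1/2}\}$, observe that it bounds the objective $\sum_i\alpha_i^2 v_i\le tS$ \emph{deterministically}, and thereby concentrate all the randomness in the single feasibility event $\prod_i\alpha_i\ge\kappa^{-d}$, handled by one Markov bound at exponent $p=1+\al/2$. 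The engine is the same --- Markov at the largest controlled moment plus independence of columns to factorize the $p$-th moment --- but you dispense with both the KKT characterization and the union bound over $2^d$ subsets, and your constant $2^d$ is even slightly better than the paper's $2^d e^{(1+\al/2)d/e}$. Two small adjustments to align exactly with the statement: take $t$ equal to, or slightly below, $C$ (not $2C$), so the good event gives $\cB_\kappa(W)\le C\E\|W\|_{HS}^2$ rather than $2C\E\|W\|_{HS}^2$; the factor-per-column bound $1+(C/t)^p$ is then at most $1+2^p$, still absorbed into an absolute $c$ since $(1+2^p)^{1/(2p)}\le 2$ for $p\ge 1$. And, as you correctly flag, both proofs tacitly use independence of the columns of $W$, which is not stated in the lemma but holds in the application after conditioning on $A_{J^c}$.
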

The proof will require the following proposition, which characterizes the regularized Hilbert-Schmidt norm by taking $y_i = \|We_i\|_2^2$:
\begin{proposition} \label{prop:hsopt}
    Let $y \in [0, \infty)^n$, and $w > 0$. Define $\mathcal S \subseteq 2^{[n]}$ as
    \[\mathcal S = \lrcpar{S \subseteq [n] \colon \forall i \in S, y_i > \lrpar{w \prod_{j \in S} y_j}^{1/\# S}}.\]
    Then
    \[\min_{a} \lrcpar{\sum_{i=1}^n a_i y_i \colon \forall i, a_i \in [0, 1], \prod_{i=1}^n a_i \ge w} = \min_{S} \lrcpar{\sum_{i \not\in S} y_i + \# S \cdot \lrpar{w \prod_{i \in S} y_i}^{1/\# S} \colon S \in \mathcal S}.\]
    Moreover, if $S \in \mathcal S$ achieves the optimum, then
    \[\sum_{i \not\in S} y_i + \# S \cdot \lrpar{w \prod_{i \in S} y_i}^{1/\# S} \le n \cdot \lrpar{w \prod_{i \in S} y_i}^{1/\# S}\]
\end{proposition}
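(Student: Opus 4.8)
The plan is to read the left‑hand side as the minimum of the linear functional $a \mapsto \sum_{i=1}^n a_i y_i$ over the convex compact set $K := \lrcpar{a \in [0,1]^n : \prod_{i=1}^n a_i \ge w}$, to exhibit an explicit ``water‑filling'' minimizer of this functional, and then to read the combinatorial formula off its support. We may assume $0 < w \le 1$ (for $w > 1$ the set $K$ is empty and for $w = 1$ it is the single point $(1,\dots,1)$, so both cases are immediate), and we may assume $y_i > 0$ for every $i$: an index with $y_i = 0$ contributes nothing to the objective, can be assigned $a_i = 1$ without affecting feasibility, and — since $0 \not> 0$ — lies in no $S \in \mathcal S$, so it may simply be deleted from both sides.

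First I would build the candidate minimizer. The map $\mu \mapsto \prod_{i=1}^n \min(1, \mu/y_i)$ is continuous and nondecreasing on $(0,\infty)$, tends to $0$ as $\mu \to 0^+$, and equals $1$ for $\mu \ge \max_i y_i$, so by the intermediate value theorem there is $\mu^* > 0$ with $\prod_{i=1}^n \min(1, \mu^*/y_i) = w$. Put $a^*_i := \min(1, \mu^*/y_i)$ and $S^* := \{i : a^*_i < 1\} = \{i : y_i > \mu^*\}$; then $a^* \in K$, and taking the product over $S^*$ gives $(\mu^*)^{\#S^*} = w \prod_{i \in S^*} y_i$, so $\mu^* = \lrpar{w \prod_{i \in S^*} y_i}^{1/\#S^*}$ and the inequalities $y_i > \mu^*$ ($i \in S^*$) say exactly that $S^* \in \mathcal S$. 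Since $a^*_i y_i = \mu^*$ on $S^*$ and $a^*_i y_i = y_i$ off $S^*$, the objective value at $a^*$ is precisely $\sum_{i \notin S^*} y_i + \#S^* \cdot \lrpar{w \prod_{i \in S^*} y_i}^{1/\#S^*}$.

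The main step — and the one I expect to require the most care — is to show that $a^*$ is a \emph{global} minimizer of the objective over $K$; the remaining verifications are routine. For this I would use the elementary one‑variable fact that, for each $i$, the function $t \mapsto y_i t - \mu^* \log t$ is convex on $(0,\infty)$ and its minimum over $(0,1]$ is attained exactly at $t = \min(1, \mu^*/y_i) = a^*_i$ (its unconstrained critical point is $\mu^*/y_i$, and when $\mu^*/y_i \ge 1$ it is nonincreasing on $(0,1]$). Hence $y_i t - \mu^* \log t \ge y_i a^*_i - \mu^* \log a^*_i$ for all $t \in (0,1]$; applying this with $t = a_i$ for an arbitrary $a \in K$ and summing over $i$ gives
\[
\sum_{i=1}^n a_i y_i \ge \sum_{i=1}^n a^*_i y_i + \mu^* \lrpar{\sum_{i=1}^n \log a_i - \sum_{i=1}^n \log a^*_i} \ge \sum_{i=1}^n a^*_i y_i ,
\]
the last step because $\mu^* > 0$ and $\sum_i \log a_i \ge \log w = \sum_i \log a^*_i$. (Equivalently, this is the KKT/weak‑duality argument with multiplier $\mu^*$ for the product constraint and $\max(0,\mu^* - y_i)$ for the box constraints.) It follows that the left‑hand side equals the objective at $a^*$, which equals the $S^*$‑term with $S^* \in \mathcal S$, and in particular it is $\ge \min_{S \in \mathcal S}(\cdots)$.

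For the reverse inequality, given any $S \in \mathcal S$ write $\mu_S := \lrpar{w \prod_{i \in S} y_i}^{1/\#S}$ and consider the point $b$ with $b_i = \mu_S/y_i$ for $i \in S$ (which lies in $(0,1)$ precisely because $S \in \mathcal S$) and $b_i = 1$ otherwise; then $b \in K$ with $\prod_i b_i = w$, and its objective value is the $S$‑term, so the left‑hand side is $\le$ the $S$‑term and hence $\le \min_{S \in \mathcal S}(\cdots)$. Combining the two directions yields the asserted identity. Finally, for the ``moreover'' clause: if $S \in \mathcal S$ attains the minimum then the corresponding $b$ is optimal in $K$, and the perturbation that replaces $b_j$ by $(1-\delta)b_j$ for some $j \notin S$ while replacing $b_{i_0}$ by $(1-\delta)^{-1}b_{i_0}$ for some fixed $i_0 \in S$ stays in $K$ for small $\delta > 0$ and changes the objective by $\delta(\mu_S - y_j) + o(\delta)$; optimality of $b$ forces $y_j \le \mu_S$ for every $j \notin S$, whence $\sum_{i \notin S} y_i \le (n - \#S)\mu_S$ and therefore $\sum_{i \notin S} y_i + \#S\cdot\mu_S \le n\mu_S = n\lrpar{w\prod_{i \in S}y_i}^{1/\#S}$.
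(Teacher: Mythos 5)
Your proof is correct and takes a genuinely different route from the paper's. The paper forms the Lagrangian and invokes the KKT conditions as a necessary characterization of the optimizer, reads off from complementary slackness that the support $S = \{i : a_i < 1\}$ belongs to $\mathcal S$ with $c = \lrpar{w\prod_{i \in S} y_i}^{1/\#S}$, and then establishes the ``moreover'' inequality via a case-split swap argument (transferring weight between some $i \notin S$ with $y_i > \mu_S$ and the smallest $y_j$ in $S$, treating $y_j > \sqrt{a_j y_i y_j}$ and $y_j \le \sqrt{a_j y_i y_j}$ separately). You instead construct the minimizer explicitly by water-filling — choosing $\mu^*$ so that $\prod_i \min(1,\mu^*/y_i) = w$ — and certify global optimality directly via the one-variable convexity bound $y_i t - \mu^*\log t \ge y_i a^*_i - \mu^*\log a^*_i$ summed against $\sum_i \log a_i \ge \log w$. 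This buys you two things: it is a self-contained weak-duality argument that avoids appealing to KKT necessity (and hence sidesteps any worry about constraint qualification for the log-concave constraint set), and your support set $S^*$ is built directly rather than extracted from an unspecified optimizer, so membership in $\mathcal S$ and nonemptiness when $w<1$ fall out for free. Your one-parameter perturbation argument for the ``moreover'' clause is also cleaner than the paper's two-case swap, since linearizing in $\delta$ immediately forces $y_j \le \mu_S$ for $j \notin S$ without any case analysis. Both proofs are sound; yours is somewhat more elementary.
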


\begin{proof}
    First, if either $y = 0$ or $w = 1$, then $\mathcal S = \{\emptyset\}$ and so the claims hold trivially. So, assume that $y \neq 0$ and $w < 1$, and note that then $\mathcal S$ is not empty, since it at least contains $\{i\}$ for every $y_i > 0$.
    To see that the left minimum is no larger than the right, take $S \in \mathcal S$, and define weights $a \in \R^n$ such that 
    \[a_i = \frac{1}{y_i} \left(w \prod_{i\in S} y_i\right)^{1/\# S}\]
    if $i \in S$, and $1$ otherwise. The definition of $\mathcal S$ then gives $a_i \le 1$ for all $1\le i\le n$. Additionally,
    \[\prod_{i=1}^n a_i = \prod_{i \in S} \lrpar{\frac{1}{y_i} \left(w \prod_{i\in S} y_i\right)^{1/\# S}} = w \frac{\prod_{i \in S} y_i}{\prod_{i \in S} y_i} = w.\]
    Moreover,
    \[\sum_{i=1}^n a_i y_i = \sum_{i \not\in S} y_i + \# S \cdot \left(w \prod_{i\in S} y_i\right)^{1/\# S}.\] so $a$ is a valid weighting.
    Since this holds for every $S \in \mathcal S$, we have
    \[\min\left\{\sum_{i \not\in S} y_i + \# S \cdot \left(w\prod_{i\in S} y_i\right)^{1/\# S} \colon S \in \mathcal S\right\} \ge \min\left\{\sum_{i=1}^n a_i y_i \colon a_i \in [0, 1], \prod_{i=1}^n a_i \ge w\right\}.\]
    To show the other direction, we first formulate the Lagrangian function of the constrained problem:
    \[\mathcal L(a, \mu, \lambda) = \sum_{i=1}^n a_i y_i + \lambda(w - \prod_{i=1}^n a_i) + \sum_{i=1}^n \mu_i(a_i - 1).\]
    The KKT conditions for this problem state that if a feasible vector $a \in \R^n$ is optimal, then there exist $(\lambda, \mu) \ge 0$ such that
    \[\frac{\partial \mathcal L}{\partial a_i}(a, \mu, \lambda) = y_i - \lambda \prod_{j \neq i} a_j + \mu_i = 0\]
    which satisfy complementary slackness: that is, $\lambda(w - \prod_i a_i) = 0$ and $\mu_i(a_i - 1) = 0$ for all $1\le i\le n$. Multiplying the stationarity condition through by $a_i,$ we obtain
    \[(y_i + \mu_i) a_i  = \lambda \prod_{i=1}^n a_i = c\]
    where $c$ is independent of $i$.
    Note that we cannot have $\lambda = 0$, as then $a_i = 0$ for all $i$ which violates the constraint $\prod_{i=1}^n a_i \ge w$. Complementary slackness thus implies that $\prod_{i=1}^n a_i = w$, and additionally that if $a_i < 1$ then we must have $\mu_i = 0$, which means $a_i = c / y_i$. Hence, if we define $S = \{i \colon a_i < 1\}$, it follows that
    \[w = \prod_{i\in S} a_i = \frac{c^{\# S}}{\prod_{i \in S} y_i}\]
    so $c = \left(w\prod_{i\in S} y_i\right)^{1/\# S}$. From the definition of $\mathcal S$, it follows that for every $i \in S$
    \[\frac{1}{y_i} \left(a_i \prod_{i\in S} y_i\right)^{1/\# S} = a_i < 1\]
    so $S \in \mathcal S$, which gives the claimed equality.
    
    To see the inequality at the optimum, let $S \in \mathcal S$, with $a \in \R^n$ the corresponding set of weights as defined above, and suppose there exists $i \not\in S$ such that
    \[y_i > \left(w \prod_{j\in S} y_j\right)^{1/\# S}.\]
    Let $y_j$ be the smallest element in $S$. Note that $a_j < 1$, and
    \[a_j y_j = \left(w \prod_{j\in S} y_j\right)^{1/\# S} < y_i.\]
    The plan is to show that by adjusting the weights of only $y_i$ and $y_j$, we can decrease the objective, which implies $S$ is not optimal. There are two cases: If $y_j > \sqrt{a_j y_i y_j}$, then define $\tilde a \in \R^n$ as 
    \[\tilde a_k = \begin{cases}
        a_k & k\neq i, j\\
        \frac{\sqrt{a_j y_i y_j}}{y_k} &k=i, j
    \end{cases}.\]
    This is a valid weighting since by assumption $\tilde a_j < 1$ and likewise $\tilde a_i = \frac{\sqrt{a_j y_i y_j}}{y_i} < \frac{y_i}{y_i} = 1$, and furthermore
    \[\prod_{k=1}^n \tilde a_k = \frac{a_j y_i y_j}{y_i y_j} \prod_{k \neq i, j} a_k = \prod_{k=1}^n a_k = w.\] Moreover,
    \[\frac{\sqrt{a_j y_i y_j}}{y_i} \cdot y_i + \frac{\sqrt{a_j y_i y_j}}{y_j} \cdot y_j = 2\sqrt{a_j y_i y_j} < y_i + a_j y_j\]
    by the AM-GM inequality, so $\sum_{k=1}^n \tilde a_k y_k \le \sum_{k=1}^n a_k y_k $. The second case is if $y_j \le \sqrt{a_j y_i y_j}$. Now define $\tilde a_i = a_j$ and $\tilde a_j = 1$. Clearly $\tilde a$ remains feasible, and observe that
    \[y_i + a_j y_j - (a_j y_i + y_j) = (1-a_j) (y_i - y_j) > 0 \]
    as $1 > a_j$ and $y_i > a_j y_i \ge y_j$ by assumption, so again $\sum_{k=1}^n \tilde a_k y_k \le \sum_{k=1}^n a_k y_k $. In either case, $a$ cannot be optimal. Hence, if $S$ is the set which achieves the optimum, we must have
    \[y_i \le \left(w \prod_{j\in S} y_j\right)^{1/\# S}\]
    for all $i \not \in S$,
    which gives the inequality.
\end{proof}

\begin{proof}[Proof of Lemma \ref{lem:hs2pluseps}]
    Let $\mathbf S$ be the (random) set defined as
    \[\mathbf S = \arg\min_{S \subseteq [d]} \left\{\sum_{i \not \in S} \|W e_i\|_2^2 + \frac{\# S}{\kappa^{2d/\# S}} \left(\prod_{i\in S} \|W e_i\|_2^2\right)^{1/\# S} \colon \forall i \in S, \|W e_i\|_2^2 > \frac{\left(\prod_{j\in S} \|W e_j\|_2^2\right)^{1/\# S}}{\kappa^{2d/\# S}}\right\}.\]
    Define the random variable
    \[Y(\mathbf S) = \frac{d}{\kappa^{2d/\# \mathbf S}} \left(\prod_{i\in \mathbf S} \|W e_i\|_2^2\right)^{1/\#\mathbf S}.\]
    Applying the proposition to the vector $(\|W e_1\|_2^2, \ldots, \|W e_d\|_2^2)$ and weight $\kappa^{-2d}$, we obtain
    \[\cB_\kappa(W) = \sum_{i \not \in \mathbf S} \|W e_i\|_2^2 + \frac{\# \mathbf S}{\kappa^{2d/\# \mathbf S}} \left(\prod_{i\in \mathbf S} \|W e_i\|_2^2\right)^{1/\# \mathbf S} \le Y(\mathbf S).\] 
    Hence,
    \begin{align*}
        \Pr(Y(\mathbf S) \ge C \E \|W\|_{HS}^2) = \sum_{S \subseteq [n]} \Pr(Y(S) \ge C \E \|W\|_{HS}^2, \mathbf S = S) \le \sum_{S \subseteq [n]} \Pr(Y(S) \ge C \E \|W\|_{HS}^2).
    \end{align*}
    The terms of the sum can be bounded using Markov's inequality as
    \begin{align*}
        \Pr(Y(S) \ge C \E \|W\|_{HS}^2) &= \Pr \lrpar{\frac{d}{\kappa^{2d/\# S}} \left(\prod_{i\in S} \|W e_i\|_2^2\right)^{1/\# S} \ge C \E \|W\|_{HS}^2}\\
        &\le \Pr \lrpar{\prod_{i\in S} \|W e_i\|_2^{2+\al} \ge \kappa^{(2+\al) d} \lrpar{\frac{C}{d} \sum_{i \in S} \E \|W e_i\|_2^2}^{(1+\al/2) \# S} }\\
        & \le \frac{\E \lrsqpar{\prod_{i \in S} \|W e_i\|_2^{2+\al}}}{\kappa^{(2+\al) d} \lrpar{\frac{C}{d} \sum_{i \in S} \E \|W e_i\|_2^2}^{(1+\al/2) \# S}}
    \end{align*}
    By independence, and using the assumed bounds on the moments of the column norms followed by AM-GM,
    \[\E \prod_{i \in S} \|W e_i\|_2^{2+\al} \le \prod_{i \in S} \E \|W e_i\|_2^{2+\al} \le \prod_{i \in S} (C \E \|W e_i\|_2^2)^{1+\al/2} \le \lrpar{\frac{C}{\# S} \sum_{i \in S} \E \|W e_i\|_2^2}^{(1 + \al/2)\# S} \]
    which gives
    \[\Pr(\cB_\kappa(W) \ge C \E \|W\|_{HS}^2) \le \frac{1}{\kappa^{(2+\al)d}} \sum_{S \subseteq [n]} \lrpar{\frac{d}{\# S}}^{(1 + \al/2)\#S}.\]
    Since $(d / \# S)^{\# S} \le e^{d/e}$, we conclude that
    \[\Pr(\cB_\kappa(W) \ge C \E \|W\|_{HS}^2) \le \frac{2^de^{(1+\al/2)d/e}}{\kappa^{(2+\al)d}}.\]
\end{proof}
Note that the columns of $W := P_{H_{J^c}^\perp}A_j$ satisfy the assumptions of \refL{lem:hs2pluseps}. 
\subsection{Net argument}
Recall that we would like to implement a net argument to prove that $\Pr\lrpar{\inf_{x \in \spread_d}\norm{Wx}_2 \leq \eps\sqrt{d}}$ is small. The naive approach would be to first bound the probability that $\cB_\kappa(W) \gg d^2$ and then, using the net guaranteed by \refT{thm:net-full} and a union bound, bound the probability that $\inf_{x \in \spread_d} \norm{Wx}_2 \le \eps\sqrt{d}$ and $\cB_\kappa(W) \lsim d^2$. However, as shown in both \cite{RV} and \cite{GL} for the case of subgaussian matrices and heavy-tailed matrices respectively, this approach will give a suboptimal probability estimate. Instead we follow the `decoupling and iteration' approach employed in \cite{RV} and \cite{GL}. The starting point of such an approach is to bound $\Pr\lrpar{\norm{Wy}_2 \leq \eps\sqrt{d}}$ for every net point $y$. 
\begin{proposition}
\label{prop:net-bound-1}
Let $w \in \R^n$ and $x \in \frac{3}{2}B_2^n \setminus \frac{1}{2}B_2^n$. Then there exist positive constants $C,c$ depending on $R$ and $\beta$ such that  
\begin{equation}\label{eq:net-bound-1}
     \Pr\lrpar{\norm{Wx - w}_2 \leq t\sqrt{d}} \leq (Ct)^{2d-1} + e^{-cN} 
\end{equation}
\end{proposition}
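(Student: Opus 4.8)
The strategy is to realize $Wx$ as the orthogonal projection of a single random vector onto a random subspace that is independent of it, and to then invoke the distance theorem \refT{thm:distance}. Write $X := A_J x \in \R^N$, so that $Wx = P_{H_{J^c}^\perp}X$. Since all entries of $A$ are independent, $X$ has independent coordinates, and each coordinate $X_k$ equals the inner product of the $k$-th row of $A_J$ with $x$, hence is a linear combination of $d$ independent mean-zero entries of $A$; applying \refL{lem:rebrova} to the unit vector $x/\norm{x}_2$ and using the hypothesis $\norm{x}_2 \geq \tfrac12$ gives $\sup_{z}\Pr\lrpar{\abs{X_k - z} < \tfrac12 u} < v$ for constants $u,v$ depending only on $a,b$ — equivalently, by \refC{cor:2+eps-to-UAC}, only on $r,R,\al$ — so the entries of $X$ satisfy \refA{ass:UAC}. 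For the second moment, expanding and using that distinct columns of $A$ are independent and mean zero,
\[
\E\norm{X}_2^2 = \sum_{i=1}^d x_i^2\, \E\norm{A_J e_i}_2^2 \leq \norm{x}_\infty^2\, \E\norm{A_J}_{HS}^2 \leq \tfrac94 KNd \leq \tfrac94 KN^2,
\]
where $A_J$, having $d = \min(N-n+1,n)$ columns, satisfies \refA{ass:HS-norm} by the submatrix hypothesis of \refT{thm:sv-general2}. Hence, after rescaling $X$ by a scalar in $(0,1]$ bounded below by a constant depending only on $a,b,K$ if necessary — which alters the anti-concentration parameters of its entries by at most a bounded factor — we may assume $X$ satisfies \refA{ass:vector-variance}.

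Next, $H_{J^c}$ is the column span of $M := A_{J^c} \in \R^{N \times (n-d)}$, whose entries are independent and satisfy \refA{ass:UAC}, and which satisfies \refA{ass:HS-norm}: in the almost-square range under consideration we have $n - d \geq \min(N-n+1,n)$ once $N \geq N_0$, so the submatrix hypothesis of \refT{thm:sv-general2} again applies. Crucially $X$ is a function of the columns of $A$ indexed by $J$ only, while $M$ depends only on those indexed by $J^c$, so $X$ and $M$ are independent. Having arranged — by the infinitesimal Gaussian perturbation fixed earlier in the section — that the columns of $A$ are linearly independent, $H_{J^c}$ has dimension $n - d = N - (2d-1)$, i.e. codimension $2d-1$ in $\R^N$; since $1 \leq N - n \leq \lambda N/\log N$ in this regime with $\lambda$ taken small enough, $1 \leq 2d-1 \leq \lambda_* N/\log N$ for $N \geq N_0$, where $\lambda_*$ is the constant of \refT{thm:distance}. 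All hypotheses of \refT{thm:distance} are thus satisfied, with ambient dimension $N$ and codimension parameter $2d-1$.

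Applying \refT{thm:distance} produces constants $C_0,c_0$ depending only on $r,R,\al,K$ with
\[
\sup_{v \in \R^N}\Pr\lrpar{\norm{P_{H_{J^c}^\perp} X - v}_2 \leq t\sqrt{2d-1}} \leq (C_0 t)^{2d-1} + e^{-c_0 N}.
\]
Because $Wx = P_{H_{J^c}^\perp} X$ and $d \leq 2d-1$ for $d \geq 1$, the event $\set{\norm{Wx - w}_2 \leq t\sqrt d}$ is contained in $\set{\norm{P_{H_{J^c}^\perp} X - w}_2 \leq t\sqrt{2d-1}}$; taking $v = w$ above — and, if $X$ was rescaled, noting the rescaling factor is $\leq 1$ so the resulting bound only improves — yields $\Pr\lrpar{\norm{Wx - w}_2 \leq t\sqrt d} \leq (C_0 t)^{2d-1} + e^{-c_0 N}$, which is \eqref{eq:net-bound-1} with $C = C_0$, $c = c_0$.

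The one step needing genuine care is checking the hypotheses of \refT{thm:distance}: that the coordinates of $A_J x$ are uniformly anti-concentrated — this is exactly where the lower bound $\norm{x}_2 \geq \tfrac12$, and hence the shape of the net whose points lie in $\tfrac32 B_2^d \setminus \tfrac12 B_2^d$, is used — together with the bookkeeping that the relevant codimension, and therefore the exponent here, is $2d-1$ rather than $d = N-n+1$, and that it must be kept below the threshold $\lambda N/\log N$; this last point is the only place in the proof of this proposition where the restriction to the almost-square regime enters.
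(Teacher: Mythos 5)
Your proof is correct and takes essentially the same route as the paper: realize $Wx = P_{H_{J^c}^\perp}A_Jx$, use \refL{lem:rebrova} (after converting \refA{ass:2+eps} to \refA{ass:UAC} via \refC{cor:2+eps-to-UAC}) to get uniform anti-concentration of the coordinates of $A_Jx$ with the lower bound $\norm{x}_2 \geq 1/2$ supplying the needed scale, and then invoke \refT{thm:distance} with codimension $2d-1$. You are in fact somewhat more thorough than the paper, which does not explicitly verify \refA{ass:vector-variance} for $A_Jx$ or \refA{ass:HS-norm} for $A_{J^c}$; your checks of these (and the rescaling remark) are correct and fill genuine gaps left implicit in the paper's one-paragraph argument.
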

\begin{proof}
    Recall that for every entry $\xi$ of $A_J$ the random variable $\xi/(\E|\xi|^2)^{1/2}$ is normalized and satisfies the higher moment assumption. Since $\E|\xi^2| \gsim 1$ by \refC{cor:uac-moment} the entries are also uniformly anti-concentrated, with $a,b$ depending on $R$ and $\al$. Since the entries of $A_J$ are independent and uniformly anti-concentrated and $\norm{x}_2 \in [1/2,3/2]$ by \refL{lem:rebrova} $A_Jx$ is a random vector with independent entries that are uniformly anti-concentrated, with $a$ and $b$ depending on $R$ and $\al$. In addition $\E\norm{A_Jx}_2^2 \lsim n \ll n^2$, so $A_Jx$ satisfies the second moment assumption. Since $W = P_{H_{J^c}^\perp}A_J$ and $H_{J^c}^\perp$ is independent of $A_J$ we may apply \refT{thm:distance} to conclude that 
    \[
    \Pr\lrpar{\norm{Wx-w}_2 \leq t\sqrt{d}} \leq (Ct)^{2d-1} + e^{-cN}.
    \]
\end{proof}
The next step is to `decouple' the events that $\norm{Wx}_2$ is small and $\cB_\kappa(W)$ is large so as to obtain a stronger probability estimate on the joint event. For our setting we employ the following decoupling lemma from \cite{GL}.
\begin{lemma}[Lemma 8.8 \cite{GL}]\label{lem:decouple}
    Let $d \geq 2$. Let $W$ be an $N \times d$ random matrix with independent columns. Let $z \in \frac{3}{2}B_2^d \setminus \frac{1}{2}B_2^d$ be such that $\abs{z_k} \geq c_1/\sqrt{d}$ for some absolute constant $c_1$. Then for every $0 < a < b$ we have 
    \[
    \Pr\lrpar{\norm{Wz}_2 < u, \cB_{\kappa^2}(W) > v} \leq 2 \Pr\lrpar{\cB_\kappa(W) > \frac{v}{2}} \sup_{x \in \frac{3}{2}B_2^d \setminus \frac{1}{2}B_2^d, w \in \R^N} \Pr\lrpar{\norm{Wx - w}_2 < c_2 u},
    \]
    for some absolute constant $c_2$.
\end{lemma}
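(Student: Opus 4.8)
The plan is a \emph{decoupling} argument in the spirit of \cite{RV} and \cite{GL}: condition on most of the columns of $W$ so as to separate the event $\{\cB_{\kappa^2}(W)>v\}$ (essentially a statement about column norms) from the small-ball event $\{\norm{Wz}_2<u\}$ (a statement about $W$ acting in one fixed direction). First I would isolate the index $k$ with $\abs{z_k}\ge c_1/\sqrt d$, set $\cF_k=\sigma(W_i:i\ne k)$, and record two elementary facts about $\cB$. Writing $\tilde W$ for $W$ with its $k$th column replaced by $0$, one has the deterministic bound $\cB_{\kappa^2}(W)\le\cB_\kappa(\tilde W)+\kappa^{-2d}\norm{W_k}_2^2$, obtained by extending a weight vector optimal for $\cB_\kappa(\tilde W)$ (which may be taken with $k$th coordinate $1$) to one admissible for $\cB_{\kappa^2}(W)$ by lowering its $k$th coordinate to a value $\le\kappa^{-d}$. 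Also, since every admissible weight vector in \eqref{eq:regularized-HS} has all coordinates at least $\kappa^{-d}$, one gets $\cB_\kappa(W)\ge\kappa^{-2d}\norm{W_i}_2^2$ for each $i$, as well as $\cB_\kappa(\tilde W)\le\cB_\kappa(W)$. Combining these,
\[
\{\cB_{\kappa^2}(W)>v\}\ \subseteq\ \{\cB_\kappa(\tilde W)>v/2\}\ \cup\ \{\norm{W_k}_2^2>v\kappa^{2d}/2\},
\]
both events on the right lying inside $\{\cB_\kappa(W)>v/2\}$; the first is $\cF_k$-measurable and the second is measurable with respect to $W_k$ alone.

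I would then decouple each piece. On the first, condition on $\cF_k$: as $W_k$ is independent of $\cF_k$ and $Wz=z_kW_k+v_0$ with $v_0:=\sum_{i\ne k}z_iW_i$ frozen, $\Pr(\norm{Wz}_2<u\mid\cF_k)\le\sup_{w}\Pr(\norm{z_kW_k-w}_2<u)$, a deterministic quantity; multiplying by the $\cF_k$-measurable indicator of $\{\cB_\kappa(\tilde W)>v/2\}$ and taking expectations yields $\Pr(\norm{Wz}_2<u,\ \cB_\kappa(\tilde W)>v/2)\le\Pr(\cB_\kappa(W)>v/2)\cdot\sup_w\Pr(\norm{z_kW_k-w}_2<u)$. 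On the second piece, symmetrically, condition on $W_k$ and split $Wz=W(z-z_ke_k)+z_kW_k$ to get $\Pr(\norm{Wz}_2<u,\ \norm{W_k}_2^2>v\kappa^{2d}/2)\le\Pr(\cB_\kappa(W)>v/2)\cdot\sup_w\Pr(\norm{W(z-z_ke_k)-w}_2<u)$. Adding the two bounds produces the factor $2$, and it remains to bound each conditional small-ball supremum by $\sup_{x\in\frac32 B_2^d\setminus\frac12 B_2^d,\,w}\Pr(\norm{Wx-w}_2<c_2u)$, by writing $z_kW_k=W(z_ke_k)$ and $W(z-z_ke_k)$ and rescaling and/or regrouping the coordinates of $z$ so that the acting vector lands in the annulus.

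This last step is the main obstacle, and it is where the structure of $z$ enters. Using a single column with only $\abs{z_k}\ge c_1/\sqrt d$ forces a rescaling by $1/\abs{z_k}$, controlled merely by $\sqrt d/c_1$, which degrades the small ball to scale $u\sqrt d$ and costs a factor $d^{\Theta(d)}$ downstream (cf.\ \refPp{prop:net-bound-1}); so one should instead condition on all but a \emph{block} $T\ni k$ chosen, via a greedy argument from $z\in\tfrac32 B_2^d\setminus\tfrac12 B_2^d$, so that the restriction $z_T$ has $\norm{z_T}_2\in[\tfrac12,\tfrac32]$, making $\{\norm{Wz}_2<u\}$ a genuine scale-$u$ small ball for $W$ applied to the zero-padded vector $z_T\in\tfrac32 B_2^d\setminus\tfrac12 B_2^d$. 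The $\cB$-comparison must then be redone for a zeroed block (the correction becoming $\kappa^{-2d/\#T}\sum_{i\in T}\norm{W_i}_2^2$), and one must verify the ``heavy block'' case is still absorbed --- either directly into $\{\cB_\kappa(W)>v/2\}$, or, when $\#T$ is large, by noting that $\norm{Wz}_2<u$ is then anyway too unlikely to require the $\cB$ factor. Keeping the exponent bookkeeping ($\kappa^2\to\kappa$) consistent through the block version, and handling the case split according to how the mass of $z$ is distributed, is the genuinely delicate content; the rest is the short algebra above.
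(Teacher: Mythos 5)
This lemma is quoted from \cite{GL} without proof in the present paper, so there is no in-paper argument to compare against; I am evaluating your sketch against the standard decoupling proof from \cite{GL}/\cite{RV}.

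Your $\cB$-algebra for the single-index version is correct (the bound $\cB_{\kappa^2}(W)\le\cB_\kappa(\tilde W)+\kappa^{-2d}\norm{W_k}_2^2$, the lower bound $\cB_\kappa(W)\ge\kappa^{-2d}\norm{W_i}_2^2$, and the monotonicity $\cB_\kappa(\tilde W)\le\cB_\kappa(W)$ all check out), and you are right that this version fails at the small-ball step: conditioning on $\cF_k$ forces you to control $\Pr(\norm{W(z_ke_k)-w}_2<u)$, and since $\abs{z_k}$ may be as small as $c_1/\sqrt d$, rescaling $z_ke_k$ into the annulus costs a factor $\Theta(\sqrt d)$ in $u$, which is fatal. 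There is a second unflagged issue in your symmetric piece: under your reading of the hypothesis (\emph{some} $k$ with $\abs{z_k}\ge c_1/\sqrt d$, which is essentially vacuous for $z$ in the annulus), the vector $z-z_ke_k$ can have arbitrarily small or even zero norm (e.g.\ $z=e_k$), so the analogous rescaling breaks there too. For the lemma to be nontrivial and for any version of the argument to close, the condition must be read as $\abs{z_k}\ge c_1/\sqrt d$ \emph{for every} $k$, i.e.\ $z$ spread — as is the case for its use in \refPp{prop:net-arg}.

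The genuine gap is that your block fix is not carried out. The $\cB$-comparison you propose for a zeroed block, with correction $\kappa^{-2d/\#T}\sum_{i\in T}\norm{W_i}_2^2$, leads to the ``heavy block'' case you flag as unresolved, and it is not clear it absorbs into $\{\cB_\kappa(W)>v/2\}$ for all $\kappa>1$ (the needed inequality $\kappa^{2d-2d/\#T}\ge\#T$ fails as $\kappa\downarrow 1$). The argument in \cite{GL} avoids this entirely by splitting $[d]$ into two halves $T_1,T_2$ of size roughly $d/2$ and using the symmetric pair of deterministic facts $\cB_{\kappa^2}(W)\le\cB_{\kappa^2}(W_{T_1})+\cB_{\kappa^2}(W_{T_2})$ (concatenating admissible weights: $\kappa^{-2\size{T_1}}\kappa^{-2\size{T_2}}=\kappa^{-2d}$) and $\cB_{\kappa^2}(W_{T_j})\le\cB_\kappa(W)$ for $\size{T_j}\ge d/2$ (restricting an admissible weight: $\kappa^{-d}\ge\kappa^{-2\size{T_j}}$). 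Then $\{\cB_{\kappa^2}(W)>v\}$ forces $\cB_{\kappa^2}(W_{T_j})>v/2$ for some $j$, each such event is measurable with respect to $W_{T_j}$ alone and lies inside $\{\cB_\kappa(W)>v/2\}$, and conditioning on $W_{T_j}$ leaves a small-ball event for $W$ applied to the zero-padded $z_{T_{3-j}}$, whose norm is bounded above and below by absolute constants because $z$ is spread. This is where the factor $2$ comes from, and no correction term or heavy-block case arises. Your sketch has the right skeleton but the deterministic $\cB$-comparisons you use are not the ones that close the argument, and the parts you defer (``one must verify'', ``the genuinely delicate content'') are precisely where the content lies.
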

With these two facts we are ready to implement the net argument.
\begin{proposition}\label{prop:net-arg}
There exists positive constants $C,c$ depending on $R$ and $\beta$ such that
\begin{equation}\label{eq:net-arg}
    \Pr\lrpar{\inf_{x \in \spread_d}\norm{Wx}_2 \leq \eps\sqrt{d}} \leq (C\eps)^d + e^{-cN}
\end{equation}
\end{proposition}
\begin{proof}
We first note that, since the left hand side of \eqref{eq:net-arg} is non-decreasing in $\eps$, it suffices to consider the case where $\log(1/\eps)$ is at most of order $N/d$. In particular we may assume that $\log(1/\eps) \ll N/d$ so that, plugging in $\eps$ for $t$ in the right hand side of \eqref{eq:net-bound-1} of \refPp{prop:net-bound-1}, one has $(C\eps)^{2d-1} + e^{-cN} \lsim (C\eps)^{2d-1}$.  
We begin by recording a well known fact (see proposition 2.1 in \cite{RV}) that the covering number of $\bS^{d-1}$ by unit balls of radius $\eps$, $N(\bS^{d-1},\eps B_2^d)$, is at most $(C_1/\eps)^{d-1}$. Next, we define $\cN_i$ to be the net of size $(C_2i/\eps)^{d-1}e^{di}$ guaranteed by \refT{thm:net-full} with $\gamma = 3$ such that $\sup_{x \in \bS^{d-1}}\inf_{y \in \cN}\norm{A(x-y)}_2 \le \eps\sqrt{\cB_\kappa(W)}/(\sqrt{2C_3d})$. Taking $m := \ceil{\log(C_4\exp(N/d))}$ 
we define the following set of events:
\begin{align*}
    \cE_1 &:= \lrset{\inf_{x \in \spread_d}\norm{Wx}_2 \leq \eps\sqrt{d}, \cB_{e}(W) < 2C_3d^2}, \\
    \cE_{2,i} &:= \lrset{\inf_{x \in \spread_d}\norm{Wx}_2 \leq \eps\sqrt{d},\cB_{e^{i+1}}(W) < 2C_3d^2 \leq \cB_{e^i}(W)},\\
    \cE_3 &:= \lrset{\cB_m(W) \geq 2C_3d^2},
\end{align*}
where $i$ ranges from 1 to $m-1$.
One can check that the event $\lrset{\inf_{x \in \spread_d} \norm{Wx}_2 \leq s\sqrt{d}}$ is contained in $\cE_1 \cup \lrpar{\cup_{1 \leq i \leq m} \cE_{2,i}} \cup \cE_3$.
Therefore
\begin{align*}
\Pr\lrpar{\inf_{x \in \spread_d}\norm{Wx}_2 \leq s\sqrt{d}} \leq 
\Pr\lrpar{\cE_1 \cup \lrpar{\cup_{1 \leq i \leq m}\cE_{2,i}}  \cup \cE_3} \leq \Pr(\cE_1) + \Pr(\cE_3) + \sum_{i = 0}^m \Pr(\cE_{2,i}).
\end{align*}
We first bound $\bP(\cE_3)$. By \refPp{prop:HS-of-W} $\E\norm{W}_{HS}^2 \lsim d^2$. Therefore by \refL{lem:hs2pluseps} we have that
\[
\bP(\cE_3) \le (cC_4\exp(N/d))^{-(2+d)\beta} = e^{-cN}.
\]
Next we bound $\bP(\cE_1)$. Note that if $\cE_1$ occurs then $\inf_{y \in \cN_1}\norm{Ay}_2 \le \eps\sqrt{d} + \sqrt{\cB_e(W)}/(\sqrt{2C_3d}) \le 2\eps\sqrt{d}$.  Therefore 
\begin{align*}
\bP(\cE_1) 
&\le \bP\lrpar{\inf_{y \in \cN_1}\norm{Wx}_2 \le 2\eps\sqrt{d}, \cB_e(W) < 2C_3d^2}, 
\\
&\le \sum_{y \in \cN_1} \bP(\norm{Wy}_2 \le 2\eps\sqrt{d}),
\\
& \le (C_2/\eps)^{d-1}e^d((C\eps)^{2d-1}), \\
& \le (C\eps)^d.
\end{align*}
In the second to last inequality we used \refPp{prop:net-bound-1}.
We now bound $\sum_{2 \le i < m}\bP(\cE_{2,i})$. Note that if $\cE_{2,i}$ occurs then $\inf_{y \in \cN_i}\norm{Ay}_2 \le \eps\sqrt{d} + \sqrt{\cB_{e^{i+1}}(W)}/(\sqrt{2C_3d}) \le 2\eps\sqrt{d}$. Therefore 
\begin{align*}
\sum_{2 \le i < m}\bP(\cE_{2.i}) 
&\le \sum_{2 \le i < m}\bP\lrpar{\inf_{y \in \cN_i}\norm{Wx}_2 \le 2\eps\sqrt{d}, \cB_{e^{i+1}}(W) < 2C_3d^2 \le \cB_{e^i}(W)} 
\\
&\le \sum_{2 \le i < m} \sum_{y \in \cN_i} \bP\lrpar{\norm{Wy}_2 \le 2\eps\sqrt{d}, \cB_{e^i}(W) \ge 2C_3d^2} \\
&\le \sum_{2 \le i < m} \sum_{y \in \cN_i} 2\bP\lrpar{\cB_{e^{i/2}}(W) \ge 2C_3d^2} \sup_{\underset{x \in \frac{3}{2}B_2^d\setminus \frac{3}{2}B_2^d}{w \in \R^d}}\bP\lrpar{\norm{Wx - w}_2 \le 2c\eps\sqrt{d}} \\
&\le \sum_{2 \le i < m} (C_2i/\eps)^{d-1}e^{di}(ce^{i/2})^{-(2+\beta)d}(C\eps)^{2d-1} \\
&\le (C\eps)^d\sum_{2 \le i < m}i^{d-1}e^{-i\beta d/2} \\
&\le (C\eps)^d.
\end{align*}
In the third inequality we used \refL{lem:decouple}, in the third fourth inequality we used \refPp{prop:net-bound-1}, and in the fifth inequality we used the fact that $\sum_{2 \le i < \infty} \sum i^{d-1} e^{-\beta d/2} \le C^d$, where $C$ depends on $\beta$. Therefore $\bP(\cE_1) + \sum_{2 \le i < m}\bP(\cE_{2,i}) + \bP(\cE_3) \le (C\eps)^d$ and we conclude as desired.
\end{proof}
We now prove \refT{thm:sv-general2}
\begin{proof}[Proof of \refT{thm:sv-general2}]
Since $d = N+1-n$ we have that
\begin{align*}
\Pr\lrpar{\sigma_n(A) \leq \eps(\sqrt{N+1} - \sqrt{n})}  &\le
\Pr\lrpar{\sigma_n(A) \le \frac{\eps d}{\sqrt{n}}} \\
&\le C^d\bP\lrpar{\inf_{x \in \spread_d} \norm{Wx}_2 \le \eps\sqrt{d}}. \\
&\le C^d((C\eps)^d + e^{-cN}), \\
&\le (C\eps)^d + e^{-cN}.
\end{align*}
In the first inequality we used \refL{lem:invert-via-dist} and in the second inequality we used \refPp{prop:net-arg}.
\end{proof}
\bibliographystyle{plain}

\end{document}